\documentclass{amsart}

\usepackage{stmaryrd}
\usepackage{enumerate}
\usepackage{enumitem}
\usepackage{multicol}
\usepackage{amssymb,amsmath}
\usepackage{amsthm}
\usepackage{turnstile}
\usepackage{textcomp}
\usepackage{graphicx}
\usepackage{subcaption}
\usepackage{mathtools}
\usepackage{wrapfig}
\usepackage{hhline}
\usepackage{array,multirow}
\usepackage{xcolor}
\usepackage{float}
\usepackage{geometry}
\usepackage{tikz}
\usetikzlibrary{positioning,calc}
\usepackage{algorithm}
\usepackage{algpseudocode}
\usepackage[hidelinks]{hyperref}
\usepackage{mathdots}
\usepackage{mathrsfs}

\algblock{Input}{EndInput}
\algnotext{EndInput}
\algblock{Output}{EndOutput}
\algnotext{EndOutput}

\theoremstyle{plain}
\newtheorem{theorem}{Theorem}[section]
\newtheorem{lemma}[theorem]{Lemma}
\newtheorem{corollary}[theorem]{Corollary}
\newtheorem{definition}[theorem]{Definition}
\newtheorem{example}[theorem]{Example}
\newtheorem{remark}[theorem]{Remark}
\newtheorem{notation}[theorem]{Notation}
\newtheorem{fact}[theorem]{Fact}
\newtheorem{proposition}[theorem]{Proposition}
\newtheorem*{claim}{Claim}

\DeclareMathOperator{\A}{\mathcal{A}}

\DeclareMathOperator{\B}{\mathcal{B}}
\DeclareMathOperator{\F}{\mathcal{F}}
\DeclareMathOperator{\N}{\mathbb{N}}

\newcommand{\arity}{\mathsf{arity}}
\newcommand{\is}[1]{\mathsf{is}_{#1}}
\newcommand{\rep}{\mathsf{rep}}

\begin{document}

\title{Almost free algebras: from the word problem to elimination of quantifiers}
\date{\today}

\author{Yifan Jia}
\email{kaiakirvan@gmail.com}
\address{University of Electronic Science and Technology of China, P.R.China}

\author{Heer Tern Koh}
\email{heertern001@e.ntu.edu.sg}
\address{University of Electronic Science and Technology of China, P.R.China}

\author{Bakh Khoussainov}
\email{bmk@uestc.edu.cn}
\address{University of Electronic Science and Technology of China, P.R.China}

\begin{abstract}
Term algebras are important objects in computer science and are correspondingly well-studied.
A natural generalization is to quotient these algebras by finitely many ground term equations, obtaining what we call almost free algebras.
One of the earliest results on almost free algebras is that their word problem is polynomial time decidable.
In this paper, we show that other natural problems: finding canonical representatives; computing the cardinality of a congruence class; checking if all congruence classes are infinite; checking if the algebra is finite; checking if two algebras are isomorphic, are all polynomial time decidable.
Another famous result regarding term algebras is that they admit quantifier elimination in a suitably expanded language.
Following this pattern, we also show that almost free algebras admit quantifier elimination by expanding the language with the standard tester predicates.
While this is implied by existing results, we view our main contribution here as providing a different approach, which we posit can be easily extended to a larger class that is not covered by existing works.
Finally, we provide an application to the quantifier elimination procedure, constructing examples of non-initial algebras over arbitrary signatures with a polynomial time word problem.
\end{abstract}

\maketitle

\section{Introduction}

Term algebras are important in computer science. They provide a framework for representing and manipulating syntactic structures, serving as the backbone for programming languages, type theory, and formal methods. By modeling abstract syntax trees (ASTs) as terms built from constructors, they enable techniques such as structural induction and primitive recursion for defining functions and proving program correctness. Term algebras also form the basis of initial algebra semantics, ensuring canonical representations for data types. They are essential in term rewriting systems, modeling computation via rewrite rules and underpinning equational reasoning, program optimization, and automated theorem proving. They support compiler construction through AST manipulation, unification in logic programming, and the design of domain-specific languages, bridging theory and practice in computer science.

Let $\mathcal F$ be the term algebra from constants of a signature $\Sigma$. The terms in $\mathcal F$ are called {\em ground terms}. Let us impose a finite set of ground term equations $\Gamma$ on $\mathcal F$. Each equation $p=q$ in $\Gamma$ asserts that the terms $p$ and $q$ denote the same abstract value. The quotient $\mathcal F_\Gamma$ is obtained by identifying terms that can be transformed into each other using $\Gamma$. This forms the algebra of terms modulo the congruence generated by $\Gamma$. If $\Gamma$ is empty, then $\mathcal F_\Gamma$ is $\mathcal F$ itself. Adding equations collapses the structure, identifying some terms while leaving others separate, and yields an infinite algebra (unless the equations force finiteness) that retains a tree-like flavor but now embodies equational constraints. This construction is entirely natural: it mirrors how we define data types by generators and relations in algebra, how we specify abstract data types in programming, and how we model computational effects in term rewriting systems. We call thus obtained algebras almost free algebras. 

The almost free algebra $\mathcal F_\Gamma$ sits at the crossroads of several fundamental concerns in computer science and logic. It generalizes the well-understood free term algebra to a setting where finite equational information is built in, raising the question of whether decidability results true for term algebras survive. It provides semantics for abstract data types with ground equations and underpins ground term rewriting systems, where confluence and termination yield algorithmic insights. Investigating such term-like algebras offers a tool for automating reasoning about infinite structures generated by finite equational constraints, placing it at the intersection of universal algebra, logic, and computation. 

In this paper we investigate algorithmic and model-theoretic properties of almost free algebras. For instance, we provide polynomial time algorithms for: the canonical representative problem; the congruence class cardinality problem; the intrinsic infinity problem; the finiteness problem; and the isomorphism problem (each to be defined shortly).
We also provide a new quantifier elimination process for almost free algebras thus generalising the well-known classical result that the term algebra possesses quantifier elimination.

\subsection{Background}
Quantifier elimination of term algebras goes back to Mal'cev \cite{mal1936}.
Rabin later reduced the decidability problem for term algebras to SnS in \cite{Rabin1969} by representing terms as trees.
Another quantifier elimination procedure for absolutely free algebras in an expanded finite language was introduced by Belegradek \cite{Belegradek1988}, and subsequently refined by Hodges \cite{Hodges1993}. 
Sturm and Weispfenning proposed in \cite{sturm2002} to construct sample solutions for the existential formulas and showed that the resulting decision procedure lies in the fourth Grzegorczyk complexity class. This is consistent with the result of Compton and Henson \cite{compton1990}, who proved that no quantifier-elimination procedure for such algebras can be elementary recursive. Mal'cev also extended his result to the classes of locally free algebras with symmetry conditions.

Although the term algebras have a decidable first-order theory, introducing internal relations can 
complicate their logical structure. Tulipani analyzed
term algebras enriched with the subterm relation with Marongiu and showed in \cite{Marongiu1993} that such an extension leads to $\Sigma_1^1$-completeness of the $\exists\Delta$ fragment.
He also identified fragments in \cite{Tulipani1994} that remain decidable, proving in particular that the existential fragment of the theory is decidable. 

External extensions of $\mathcal F$  have also been investigated. Manna, Sipma, and Zhang investigated the term algebras equipped with a length function and Presburger arithmetic constraints, whose complexity is $2k$-fold exponential for $k$ quantifier alternations \cite{Zhang2004}; this was later improved to k-fold exponential in \cite{Zhang2006}.  Voronkov and Korovin showed in \cite{korovin2000} the decidability of the existential fragment of term algebras equipped with an ordering induced by assigning weights to terms via an arbitrary weight function on the signature, commonly known as the Knuth–Bendix ordering. Voronkov and Rybina further established in \cite{Rybina2001} the decidability of the theory of term algebras extended with queues, which constitute the only class of abstract data objects that cannot be represented as many-sorted term algebras. 

In contrast, quotients of free term algebras are not as well-studied. 
D. Kozen studied quotients of $\mathcal F$ obtained by factoring out a finite set of equations. He proved that  the word problem for such algebras is in $P$ by \cite{kozen77}. Comon presented a set of rules for quantifier elimination in term algebras modulo quasi-free congruences in \cite{comon1993}, which generalises Mal'cev’s result.
Khoussainov and Rubin introduced a more general setting in \cite{Bakhadyr2005}, namely the algebra freely generated by a partial algebra $\A$. They showed that if $\A$ admits quantifier elimination, then so does its free total extension $\F(\A)$. However, the proofs in \cite{Bakhadyr2005} are sketchy and hence unclear, with many details left to the reader.

\subsection{Our Contribution}\label{sec:contribution}
The rest of the paper is organized as follows.
In Section \ref{sec:almostfree}, we provide some basic preliminaries and introduce the main object of interest, almost free algebras, characterizing these algebras as exactly those which are the free extensions of finite partial algebras.

In Section \ref{sec:algprop}, we study the following natural questions regarding almost free algebras.
In the following, fix $\Gamma$.
(1) {\bf The canonical representative problem}: Given a term $t$, compute a canonical representative in the $\sim_{\Gamma}$-congruence class of $t$---$\rep(t)$ such that if $s=t$ (in the almost free algebra), then $\rep(s)=\rep(t)$ (as terms); (2) {\bf The congruence class cardinality problem}: Given a term $t$, compute the cardinality of the $\sim_{\Gamma}$-congruence class of $t$ if it is finite, and return $\infty$ otherwise; (3) {\bf The intrinsic infinity problem}: Is every $\sim_{\Gamma}$-congruence class infinite; (4) {\bf The finiteness problem}: Is the almost free algebra $\F_{\Gamma}$ finite; (5) {\bf The isomorphism problem}: Given $\Sigma,\Gamma_{1},\Gamma_{2}$, check if $\F_{\Gamma_{1}}\cong\F_{\Gamma_{2}}$.
We show that each of these is polynomial time solvable.

In Section \ref{sec:qe}, we provide the quantifier elimination process for almost free algebras, showing that its theory is decidable.
We adapt the approach by Mal'cev \cite{mal1936}, also utilising ideas from Khoussainov and Rubin \cite{Bakhadyr2005}.
We point out that while our proof is the same in spirit as Khoussainov's and Rubin's, there is a non-trivial difference in that we do not place as many `constraints' upon the formulas, in particular the free variables, which is the part of their proof we found unconvincing.
We remark also that our result is implied by Comon's more general result, but that we take a different more direct approach which may admit an easy extension that is not implied by Comon's result (see Remark \ref{rem:extension}).
Roughly speaking, Comon's approach requires that the equations mentions only terms of height $\leq 1$, and then showing that every set of ground term equations can be rewritten to satisfy this property by introducing new constant symbols to the algebra.
In this paper, we perform the quantifier elimination procedure only expanding our language with tester predicates.

Finally, in Section \ref{sec:app}, we provide applications of the quantifier elimination procedure, constructing a non-initial algebra (i.e., one that cannot be axiomatised by only finitely many equations), but yet has a polynomial time word problem.
This can be thought of as contrasting Kozen's result that every almost free algebra (which is initial) has polynomial time word problem, and the classical result that there are groups (which are also initial algebras\footnote{The group axioms can be formulated as a finite set of equations involving variables, and any further relation between the generators can similarly be expressed as ground term equations.}) with undecidable word problems \cite{nov55,boone58}.

\section{Almost free algebras}\label{sec:almostfree}

Let $\Sigma$ be a finite functional signature having  function  symbols $f,  g, \ldots$ and constant symbols $c_0, c_1, \ldots$. 
Structures of $\Sigma$ are {\bf algebras} that we denote by $\mathcal A=(A; f^{\mathcal A}, g ^{\mathcal A}, \ldots,  c_0^{\mathcal A}, c_1^{\mathcal A}, \ldots)$.
If there is no confusion, we often omit the superscripts $\mathcal A$ from $f^{\mathcal A}$.    An algebra generated by the constants of $\Sigma$ is called {\bf $\bar{c}$-generated}. Every $c$-generated algebra is finitely generated.  By definition,  all algebras $\mathcal A$  are such that for all $f\in \Sigma$, their interpretations $f^{\mathcal A}$ are total operations. We, however, allow {\bf partial algebras}, where some interpretations $f^{\mathcal A}$ of $n$-ary function symbols might be partial operations, that is, on some tuples $(a_1, \ldots, a_n)$ the values $f(a_1, \ldots, a_n)$ are undefined.

\medskip

We now define {\bf terms} of $\Sigma$.  
All variable and constant symbols are  terms. If $t_1$, $\ldots$, $t_n$ are terms and $f\in \Sigma$ is an $n$-ary function symbol, then the expression $f(t_1, \ldots, t_n)$  is a  term.
We can turn the set of terms into the {\bf term algebra} as follows.
The domain of the algebra is the set of all terms. The interpretation of
each $f\in \Sigma$ is given by the following rule. If $f$ is a constant $c$, then the interpretation of $f$ is $c$ itself. 
If $f$ has arity $n>0$, then the interpretation of $f$ is such that the value of $f$ on tuple of terms $(t_1, \ldots, t_n)$ is the term $f(t_1, \ldots, t_n)$.  

\begin{notation}
From now on, we sometimes denote terms in Polish notation (without parentheses), for instance, $ft_{1}\dots t_{k}$, and reserve parentheses, say $f(t_{1},\dots,t_{k})$ for when we mean to evaluate the function symbol on the given inputs.
\end{notation}

A term $t$ is a {\bf ground term} if $t$ has no variables. The  {\bf tree representations}  $\Omega(t)$ of the ground term $t$ is defined as below:

\begin{definition}\label{def:tree}
For a ground term $t$, define the {\bf tree representation of $t$}, denoted $\Omega(t)$, to be a labeled tree $\Omega(t)\subseteq\Sigma\times\N^{<\N}$ as follows:
\begin{itemize}
\item For each constant $b$, $\Omega(b)=\{(b,\epsilon)\}$.
\item Let $t=ft_{0}t_{1}\dots t_{\arity(f)-1}$. Then define
$$
\Omega(t)=\{(f,\epsilon)\}\cup\bigcup_{i<\arity(f)}\{(\theta,i^{\frown}\sigma)\mid (\theta,\sigma)\in\Omega(t_{i})\}.
$$
\end{itemize}
We abuse notation and write $\Omega^{-1}(t,\sigma)$ for  the subterm of $t$ rooted at $\sigma$. The {\bf height of $t$}, denoted by $\Delta(t)$, is the height of $\Omega(t)$.
The {\bf size of $t$}, denoted $|t|$, is $|\Omega(t)|$.
We  might identify $t$ with $\Omega(t)$ without mentioning it explicitly. 
\end{definition}

Let $F$ be the set of all {\bf ground terms}, terms generated by only constant and function symbols.
The set of all ground terms is a subalgebra of the algebra of terms. We call this subalgebra the {\bf ground term algebra} and denote it by $\mathcal F$.  The ground term algebra $\F$ is clearly $\bar c$-generated. Moreover, the algebra $\mathcal F$ is {\bf universal} in the sense that every $\bar{c}$-generated algebra is a homomorphic image of the ground term algebra $\mathcal F$. Any universal $\bar c$-generated algebra is isomorphic to $\mathcal F$.

\begin{definition}
 Key concepts of this paper are the following:
\begin{enumerate}
\item An {\bf equation} is an expression of the form $p=q$ where $p$ and $q$ are terms.  
\item If $p$ and $q$ are ground terms, then we call the equation  $p=q$
a {\bf ground term equation}. 
\item     A {\bf finite presentation} is a finite set $\Gamma$ of term equations. 
\end{enumerate}
\end{definition}

For a finite presentation $\Gamma$, we define the relation $\sim_{\Gamma}$ on $\mathcal F$:
$p\sim_{\Gamma} q $ if $\Gamma$ proves $p=q$, that is, $\Gamma \vdash p=q$. The relation $\sim_{\Gamma}$ is a congruence relation on $\mathcal F$. 
Hence, we have the quotient algebra $\mathcal F/\sim_\Gamma$. 
An algebra $\mathcal A$ is {\bf finitely presented} if there is exists a finite set $\Gamma$ of equations such that $\mathcal A$ is isomorphic to $\mathcal F/\sim_\Gamma$. For instance, the two generated free group $F_2$ is a finitely presented algebra over the signature
$f,g, a,b, e$, where $f$ is binary, $g$ is unary, and $a,b,e$ are constants. A finite presentation of $F_2$ is then this:
$$
\Gamma=\{fxfyz=ffxyz, \  ggx=x, \ fxgx=e, \  fxe=x, \ fex=x\}. 
$$
The group $F_2$ is then isomorphic to $\mathcal F/\sim_\Gamma$. In this paper, we are interested in those presentations $\Gamma$ that have no variables. We now single out these algebras:

\begin{definition}\label{def:almostfree}
An algebra $\A$ is {\bf almost free} if there exists a finite set $\Gamma$ of ground term equations such that $\mathcal A$ is isomorphic to $\mathcal F/ \sim_{\Gamma}$. Denote the quotient $\mathcal F/ \sim_{\Gamma}$ by $\mathcal F_{\Gamma}$
\end{definition}

Assume that $\Gamma$ is a finite set of ground term equations. Then the congruence relation $\sim_{\Gamma}$ can be described as follows. 
We write $p\rightarrow_{\Gamma} q$ if there is an equation $p'=q'\in \Gamma$ or $q'=p'\in\Gamma$ such that $p$ contains a subterm $p'$ and $q$ is obtained from $p$ by replacing $p'$ with $q'$. If we represent $p$ as the tree $\Omega(p)$, then $p\rightarrow q$ corresponds to replacing the subtree $\Omega(p')$ of the tree $\Omega(p)$  with the tree $\Omega(q')$. By $\rightarrow^\star_{\Gamma}$ we denote the transitive closure of the relation $\rightarrow_{\Gamma}$. The following is a standard result:

\begin{proposition}
Let $\Gamma$ be a finite set of ground term equations. Then for all $p,q \in \mathcal F$, the following three conditions are equivalent: \ (1) $\Gamma \vdash p=q$. (2)  $p\rightarrow^\star_{\Gamma} q$.  (3) $\mathcal F_{\Gamma} \models p=q$. \qed 
\end{proposition}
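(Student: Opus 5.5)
The plan is to prove the cycle (1) $\Rightarrow$ (3) $\Rightarrow$ (2) $\Rightarrow$ (1). Throughout, I read $\rightarrow^\star_{\Gamma}$ as the reflexive–transitive closure; trivially $p$ is related to itself, so this is harmless. Since $\rightarrow_{\Gamma}$ allows each equation of $\Gamma$ to be applied in either direction, the relation $\rightarrow_{\Gamma}$ is symmetric. Hence $\rightarrow^\star_{\Gamma}$ is an equivalence relation on $F$.

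For (1) $\Rightarrow$ (3), I use soundness of equational logic. The algebra $\F_{\Gamma}=\F/\sim_\Gamma$ satisfies every equation of $\Gamma$: for $p'=q'\in\Gamma$, the value of the ground term $p'$ in the quotient is its class $[p']$, and $[p']=[q']$ because $\Gamma\vdash p'=q'$. The rules of equational deduction preserve truth in any model, so every consequence $p=q$ of $\Gamma$ holds in $\F_{\Gamma}$. These rules are reflexivity, symmetry, transitivity, and congruence for each $f\in\Sigma$; substitution is vacuous for ground equations. (Alternatively, since $\sim_\Gamma$ is defined via provability, (1) $\Leftrightarrow$ (3) is essentially by definition: a ground term $t$ evaluates to $[t]$ in $\F_{\Gamma}$, so $\F_{\Gamma}\models p=q$ iff $[p]=[q]$ iff $p\sim_\Gamma q$.) For this reason, the real content lies in comparing $\vdash$ with $\rightarrow^\star_\Gamma$.

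For (3) $\Rightarrow$ (2), equivalently (1) $\Rightarrow$ (2), it suffices to show that $\rightarrow^\star_{\Gamma}$ is a congruence on $\F$ containing $\Gamma$. The congruence $\sim_\Gamma$ is the least congruence containing $\Gamma$, namely the set of equations derivable from $\Gamma$. So once $\rightarrow^\star_{\Gamma}$ is shown to be such a congruence, $\sim_\Gamma\subseteq\rightarrow^\star_{\Gamma}$. Containment of $\Gamma$ is immediate, by rewriting the whole term at the root. We already have that $\rightarrow^\star_{\Gamma}$ is an equivalence relation. For compatibility, suppose $p_i\rightarrow^\star_{\Gamma} q_i$ for $i<n$. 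Each single step $p_i\rightarrow_\Gamma p_i'$ replaces a subterm of $p_i$, so $fp_0\dots p_i\dots p_{n-1}\rightarrow_\Gamma fp_0\dots p_i'\dots p_{n-1}$: the same occurrence is replaced, now sitting at position $i^\frown\sigma$ in $\Omega$. Chaining these steps one argument at a time gives $fp_0\dots p_{n-1}\rightarrow^\star_\Gamma fq_0\dots q_{n-1}$.

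For (2) $\Rightarrow$ (1), I induct on the length of the rewriting sequence, so it suffices to show that a single step $p\rightarrow_\Gamma q$ gives $\Gamma\vdash p=q$. Here $p=C[p']$ and $q=C[q']$ for a context $C$ with one hole at position $\sigma$, and $p'=q'$ or $q'=p'$ lies in $\Gamma$. I induct on $|\sigma|$. If $\sigma=\epsilon$, the claim is an axiom, possibly combined with symmetry. Otherwise $\sigma=i^\frown\sigma'$, and the induction hypothesis gives $\Gamma\vdash p_i=q_i$ for the $i$th arguments. The other arguments agree, so reflexivity handles them, and the congruence rule for $f$ yields $\Gamma\vdash p=q$. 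Transitivity then handles sequences of steps.

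The only delicate point is the bookkeeping of "replacing a subterm occurrence" via tree positions in $\Omega$; the rest is routine.
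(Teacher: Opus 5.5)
Your proof is correct and complete. The paper does not prove this proposition; it quotes it as a standard result, and your cycle is the standard argument it alludes to. That is: (1)$\Leftrightarrow$(3) because each ground term evaluates to its own $\sim_\Gamma$-class in $\mathcal F/\sim_\Gamma$; (1)$\Rightarrow$(2) because $\rightarrow^\star_\Gamma$ is a congruence containing $\Gamma$ and $\sim_\Gamma$ is the least such; and (2)$\Rightarrow$(1) by induction on the depth of the rewritten position and then on the length of the chain.

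Two small remarks. First, reading $\rightarrow^\star_\Gamma$ as the reflexive and transitive closure is not just harmless but necessary. The paper literally says ``transitive closure'', and under that reading the statement fails for $\Gamma=\emptyset$ and $p=q$, since then $\rightarrow_\Gamma$ is empty while $\Gamma\vdash p=p$. Second, substitution is not literally vacuous: a derivation of a ground equation may pass through non-ground ones such as $x=x$. Substitution is sound in every algebra, though, so neither your soundness step nor the ``least congruence'' step is affected.
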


{\bf Partial algebras}  are structures of $\Sigma$ where the function symbols $f \in \Sigma$ (of arity $n$) are interpreted as partial operations.
However, we still postulate that these are generated by values of constants of $\Sigma$.  
\begin{definition}
We say that an algebra $\mathcal A$ is {\bf free over $\mathcal B$} if (1) $\mathcal B$ is a substructure of $\mathcal A$, (2) every algebra $\mathcal C$ that contains $\mathcal B$ as a substructure is a homomorphic image of $\mathcal A$. 
\end{definition}
We note that  any two free algebras over $\mathcal B$ are isomorphic. 

Let $\mathcal F(\mathcal B )$ be the free algebra over $\mathcal B$. Intuitively,  $\F(\B)$ is the set of terms obtained by applying the function symbols to elements of $\B$, but evaluating each subterm and replacing it with an element from $\B$ whenever possible. Here is a constructive definition of  $\F(\B)$:
\begin{enumerate}
    \item All elements of $\B$ are called {\bf $\B$-terms}.
    \item Let $f\in \Sigma$ be an $s$-ary function symbol. For all $\B$-terms $t_1$, $\ldots$, $t_s$, the expression $ft_1\ldots t_s$, where $f$ is an $s$-ary function symbol from $\Sigma$, is a {\bf $\B$-term} if and only if $f(t_1, \ldots, t_s)^{\B}$ is undefined in $\B$.
\end{enumerate}
The domain of $\F(\B)$ is the set of all $\B$-terms. (We will often use the same symbol to denote both the algebra and its domain.)
An $s$-ary function symbol $f\in\Sigma$ is interpreted as follows. If all $t_1, \ldots, t_s$ belong to $\B$ and $f(t_1,\ldots, t_s)^{\B}=b\in\B$, then $f(t_1, \ldots, t_s)^{\F(\B)}=b$. Else, the value of $f(t_1, \ldots, t_s)^{\F(\B)}$ is the expression $ft_1\ldots t_s$ itself.

As a simple example, consider the partial algebra with domain $\mathcal C$, the set of all constants of $\Sigma$, such that for all function symbols $f\in \Sigma$, the interpretation of $f$ on $\mathcal C$ is undefined on all tuples of $\mathcal C$. This defines the partial algebra $\mathcal C$. Then the free  algebra over $\mathcal C$, that is the algebra $\mathcal F (\mathcal C)$,  coincides with the ground term algebra $\mathcal F$. We now characterize almost free algebras as algebras free over finite partial algebras.

\begin{theorem}\label{thm:char}
An algebra $\A$ is almost free if and only if it is free over a finite partial algebra $\B$.
\end{theorem}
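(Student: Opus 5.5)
Both directions go through the universal property of $\F(\B)$ and the quotient description $\F_\Gamma=\F/\sim_\Gamma$. Since free algebras over $\B$ are unique up to isomorphism, and isomorphism preserves almost freeness, it suffices to prove two things. First, for a finite partial algebra $\B$, the algebra $\F(\B)$ is isomorphic to some $\F_\Gamma$. Second, for each finite set $\Gamma$ of ground equations, $\F_\Gamma$ is free over a finite partial algebra $\B$.

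\emph{From $\B$ to $\Gamma$.} Let $\B$ be a finite partial algebra generated by the constants. For each $b\in\B$ fix a ground term $t_b$ whose value in $\B$ is $b$; these exist because $\B$ is $\bar c$-generated. Let $\Gamma$ consist of two kinds of equations. The first kind is $c=t_b$ for every constant $c$ with $c^{\B}=b$. The second kind is $f t_{b_1}\dots t_{b_n}=t_b$ for every defined instance $f(b_1,\dots,b_n)^{\B}=b$. Then $\Gamma$ is finite. We check that $\F_\Gamma$ satisfies the defining property of being free over $\B$.
\begin{enumerate}
\item The map $b\mapsto [t_b]$ embeds $\B$ as a substructure. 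The $\Gamma$-equations hold in $\F(\B)$, since each side of each equation evaluates to the same element of $\B$. Hence the natural homomorphism $\F\to\F(\B)$ factors through $\F_\Gamma$. Composing $b\mapsto[t_b]\mapsto b$ gives the identity, so the map $b\mapsto[t_b]$ is injective. It respects defined operations by the second kind of equation.
\item Let $\mathcal C$ be any algebra containing $\B$ as a substructure. Then the $\Gamma$-equations hold in $\mathcal C$, so $\mathcal C$'s generated part is a homomorphic image of $\F_\Gamma$.
\end{enumerate}
(If $\mathcal C$ need not be generated by the constants, the universal property as stated should be read for $\bar c$-generated $\mathcal C$, which is the paper's standing convention.) Uniqueness of free algebras then gives $\F(\B)\cong\F_\Gamma$.

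\emph{From $\Gamma$ to $\B$.} Let $S$ be the set of all subterms of terms occurring in $\Gamma$, together with all constants; $S$ is finite and subterm-closed. Let $\B$ be the image of $S$ in $\F_\Gamma$, so $\B$ is finite. Define $f(b_1,\dots,b_n)^{\B}$ to be the $\F_\Gamma$-value whenever some term $ft_1\dots t_n\in S$ has $[t_i]=b_i$. This operation is well defined because it is the restriction of the total operation of $\F_\Gamma$. Since $S$ contains the constants and is subterm-closed, $\B$ is generated by the constants. Moreover, every equation in $\Gamma$ is witnessed inside $\B$: both sides lie in $S$ and evaluate to the same element. Given any $\mathcal C\supseteq\B$ (as a substructure), each equation of $\Gamma$ holds in $\mathcal C$, because its two sides are computed inside $\B$ using only defined operations. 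Hence $\mathcal C$ is a homomorphic image of $\F_\Gamma$, and $\F_\Gamma$ is free over $\B$.

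\emph{Main obstacle.} The delicate point is making sure that $\B$ is a genuine \emph{substructure} of $\F_\Gamma$ in the partial-algebra sense. In particular, one must decide whether operations of $\F_\Gamma$ that happen to land in $\B$, but are not witnessed by a term of $S$, must be defined in $\B$. If the definition of substructure demands that $f^{\B}$ equal $f^{\F_\Gamma}$ wherever the value lies in $B$, then I would instead take $\B$ to be the full induced partial structure on the finite set $B$. The universal argument is unchanged, since it only needs more defined operations. To confirm freeness concretely, I would also compare with the constructive $\B$-term model. Using the rewriting characterization $\rightarrow^\star_\Gamma$ from the Proposition, every class of $\F_\Gamma$ outside $B$ has a unique representative as a $\B$-term. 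This holds because $\Gamma$-rewrites only act within subterms whose values lie in $B$; this needs an induction on term height, and it is where I expect the real work to lie.
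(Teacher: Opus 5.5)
Your proposal is correct and follows essentially the paper's route: in both directions you show $\F_\Gamma$ is initial among ($\bar c$-generated) algebras containing $\B$, taking $\Gamma$ to be the operation table of $\B$ and $\B$ to be a finite set of classes closed under subterms of $\Gamma$. The only differences are that the paper uses all classes with a representative of height at most the maximal height in $\Gamma$ where you use subterms of $\Gamma$ plus constants, that you are more careful in naming elements of $\B$ by ground terms $t_b$, and that the concrete comparison with the $\B$-term model in your last paragraph is unnecessary, since the universal-property argument with uniqueness of free algebras already closes the proof.
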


We first state two facts that will be used in the proof of Theorem \ref{thm:char} and \ref{thm:iso}.
\begin{fact}\rm
Fix a signature $\Sigma$ and consider a finite partial algebra $\B$ over $\Sigma$.
For any algebra $\A$ which contains $\B$ as a substructure, i.e., there is a homomorphic embedding of $\B$ into $\A$ (which fixes the constants), there exists a unique surjective homomorphism from $\F(\B)$ onto $\A$.
In categorical terms, $\F(\B)$ is the initial object in the category with objects algebras that contain $\B$ as a substructure and arrows given by surjective homomorphisms.
\end{fact}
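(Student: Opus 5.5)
The statement to prove is the Fact: if $\B$ is a finite partial algebra and $\A$ is an algebra containing $\B$ as a substructure, via an embedding $\iota$ fixing the constants, then there is a unique surjective homomorphism $\F(\B)\to\A$. The plan is to use the constructive description of $\F(\B)$ by $\B$-terms. I would define $h$ by recursion on the structure of $\B$-terms and then check the required properties in turn.

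\emph{Definition of $h$.} For $b\in\B$ set $h(b)=\iota(b)$. For a compound $\B$-term $ft_1\dots t_s$, set $h(ft_1\dots t_s)=f^{\A}(h(t_1),\dots,h(t_s))$. This is well defined because each $\B$-term has a unique readability as either an element of $\B$ or a compound expression. I would keep elements of $\B$ formally distinct from compound expressions, so the two clauses cannot overlap.

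\emph{$h$ is a homomorphism.} Fix $f$ and $\B$-terms $t_1,\dots,t_s$, and split into two cases.
\begin{enumerate}
\item All $t_i\in\B$ and $f(t_1,\dots,t_s)^{\B}=b$ is defined. Then $f^{\F(\B)}(\bar t)=b$, so we need $\iota(b)=f^{\A}(\iota(t_1),\dots,\iota(t_s))$. This is exactly the statement that $\iota$ preserves the defined operations of $\B$, which is what ``substructure'' means.
\item Otherwise, $f^{\F(\B)}(\bar t)$ is the term $ft_1\dots t_s$ itself. The recursive clause gives $h(ft_1\dots t_s)=f^{\A}(h(t_1),\dots,h(t_s))$ by definition.
\end{enumerate}
Constants are preserved because $\iota$ fixes them.

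\emph{Surjectivity.} Algebras in this paper are $\bar c$-generated, so $\A$ is generated by its constants. The image of $h$ is a subalgebra of $\A$ containing every constant, hence it is all of $\A$.

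\emph{Uniqueness.} Let $h'$ be any homomorphism $\F(\B)\to\A$. Since $\F(\B)$ is itself generated by the constants, $h'$ is determined by its values on them. A homomorphism must send each constant to its interpretation in $\A$, so $h'$ and $h$ agree on a generating set and therefore coincide; formally this is an induction on $\B$-terms. Uniqueness does not use surjectivity at all.

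\emph{Categorical reformulation.} The claim that $\F(\B)$ is initial in the category whose objects are algebras containing $\B$ and whose arrows are surjective homomorphisms follows immediately from existence and uniqueness. Arrows here should be understood as compatible with the copies of $\B$, and this compatibility is automatic because everything fixes the constants.

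The main obstacle is the well-definedness of the recursion, together with the first case of the homomorphism check. One has to confirm that a $\B$-term never arises in two ways: the constructive definition only permits $ft_1\dots t_s$ when $f(\bar t)^{\B}$ is undefined, so the clauses are disjoint. One also has to confirm that ``substructure'' is read as ``$\iota$ preserves every defined value of the partial operations,'' since that reading is exactly what the first case needs. Everything else is routine structural induction.
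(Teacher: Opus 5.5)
Your argument is correct. The paper gives no proof of this statement: it is recorded as a standard Fact and then used in the proofs of Theorems \ref{thm:char} and \ref{thm:iso}. Your proof is the standard verification the Fact presupposes: recursion on $\B$-terms, the two-case homomorphism check, surjectivity because the image is a subalgebra containing the constants, and uniqueness by induction.

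Two small points are worth stating explicitly.
\begin{enumerate}
\item For the categorical clause, you also need $\F(\B)$ to be an object of the category. That is, $\F(\B)$ is total and the inclusion $\B\subseteq\F(\B)$ preserves the defined operations. Both follow immediately from the definition of $f^{\F(\B)}$.
\item Your step ``$\F(\B)$ is itself generated by the constants'' rests on the paper's postulate that partial algebras are generated by the values of constants. The same postulate makes compatibility with the copies of $\B$ automatic. Likewise, surjectivity rests on $\A$ being $\bar c$-generated.
\end{enumerate}

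You are right to invoke the $\bar c$-generated hypothesis, since without it the Fact is false. Take a signature with a constant $c$ and a unary symbol interpreted as successor. Let $\B=\{c\}$, and let $\A=\N\sqcup\mathbb{Z}$ with $c$ interpreted as $0\in\N$. Then $\A$ contains $\B$, but no homomorphism from $\F(\B)\cong\N$ is onto $\A$.
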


\begin{fact}\rm
Fix a signature $\Sigma$ and a finite set of equations $\Gamma$.
In the category of algebras $\A$ such that $\A\vDash\Gamma$ with arrows surjective homomorphisms, $\F_{\Gamma}$ is the initial object.
\end{fact}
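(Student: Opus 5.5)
The plan is to construct the arrow directly from the universality of the ground term algebra $\mathcal F$. We then show that it factors through $\sim_\Gamma$, and finally that it is unique. Throughout I read ``algebra'' as in the paper's convention, namely $\bar c$-generated. Without this a surjective homomorphism out of $\F_\Gamma$ could not exist: its image is generated by the constants, so surjectivity forces $\A$ to be $\bar c$-generated.

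First, $\F_\Gamma$ is itself an object of the category, i.e.\ $\F_\Gamma\vDash\Gamma$. For a ground equation $p=q\in\Gamma$ this holds because $\Gamma\vdash p=q$. For an equation with variables we must check every assignment. Elements of $\F_\Gamma$ are classes $[t]$ of ground terms, so an assignment $x_i\mapsto[t_i]$ evaluates $p$ to $[p(\bar t)]$. The substitution rule of equational logic gives $\Gamma\vdash p(\bar t)=q(\bar t)$, hence $[p(\bar t)]=[q(\bar t)]$.

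Next, fix an algebra $\A\vDash\Gamma$ and define $h\colon\mathcal F\to\A$ by $h(t)=t^{\A}$, the value of the ground term $t$ in $\A$. By induction on terms this map is a homomorphism: $h(ft_1\dots t_n)=f^{\A}(h(t_1),\dots,h(t_n))$ and $h(c)=c^{\A}$. It is surjective because $\A$ is $\bar c$-generated, so every element is the value of some ground term.

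The key step is to show $\sim_\Gamma\subseteq\ker h$, i.e.\ that $\Gamma\vdash p=q$ implies $p^{\A}=q^{\A}$. This is soundness of equational logic, proved by induction on derivations:
\begin{itemize}
\item axioms hold in $\A$ since $\A\vDash\Gamma$;
\item reflexivity, symmetry and transitivity are preserved trivially;
\item congruence is preserved because $f^{\A}$ is a function;
\item substitution is preserved because an equation true under all assignments stays true after substituting terms.
\end{itemize}
When $\Gamma$ is ground one can instead use the Proposition: induct on the length of a chain $p\rightarrow_\Gamma\cdots\rightarrow_\Gamma q$. In a single step a subterm $p'$ is replaced by $q'$ with $p'^{\A}=q'^{\A}$, and compositionality of evaluation (induction on the position of the subterm) gives equal values. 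Consequently $h$ induces a well-defined map $\bar h([t])=t^{\A}$. It is a homomorphism because the operations of $\F_\Gamma$ are defined on representatives, and it is surjective because $h$ is.

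Finally, uniqueness. Any homomorphism $g\colon\F_\Gamma\to\A$ must send $[c]$ to $c^{\A}$ for each constant $c$, and must commute with the operations. Induction on terms then gives $g([t])=t^{\A}=\bar h([t])$ for every ground term $t$, and every element of $\F_\Gamma$ has this form. So $\bar h$ is the unique arrow and $\F_\Gamma$ is initial.

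The only step with real content is the soundness induction, specifically the substitution and replacement cases. Everything else is bookkeeping, apart from making the implicit $\bar c$-generation assumption explicit.
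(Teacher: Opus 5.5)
Your proof is correct; the paper gives no proof of this Fact and treats it as the standard universal property of $\F_\Gamma$, and your argument is exactly that standard one: evaluation map $\mathcal F\to\A$, soundness of equational logic to factor it through $\sim_\Gamma$, and uniqueness by induction on terms. Your explicit requirement that objects be $\bar c$-generated is a genuinely needed hypothesis that the paper leaves implicit, since otherwise (e.g.\ for $\Gamma=\emptyset$ and a term algebra with one extra generator) there is no surjection out of $\F_\Gamma$ at all.
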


\begin{proof}[Proof of Theorem \ref{thm:char}]
Let $\A\cong\F_{\Gamma}$ where $\Gamma=\{a_{0}=b_{0},a_{1}=b_{1},\dots,a_{n}=b_{n}\}$ be almost free.
Consider $\B$ a partial algebra defined as follows.
Let $N=\max\{\Delta(a_{i}),\Delta(b_{i})\mid i\leq n\}$.
Then for each $[b]\in \A$, if there exists a term $b^{*}\in[b]$ so that $\Delta(b^{*})\leq N$, then let $[b]\in\B$.
Note that $\B$ is finite.
For such a $\B$, observe that any algebra $\mathcal{C}$ containing $\B$ as a substructure is a model of $\{a_{0}=b_{0},\dots,a_{n}=b_{n}\}$.
Thus, there is a unique surjective homomorphism from $\A$ to $\mathcal{C}$.
Additionally, by the construction of $\B$, $\A$ also contains $\B$ as a substructure.
That is, $\A$ is the initial object in the category of algebras containing $\B$ as a substructure and hence isomorphic to $\F(\B)$.

For the converse, let $\B=(B;f_{0},f_{1},\dots, c_0, c_1, \ldots)$ be a finite partial algebra and let $\Gamma=\{f\mathbf{b}=c \mid f\in\Sigma,\,\mathbf{b}\in\B^{<\N},\,c\in \B,\,\B \models f\mathbf{b} =c\}$.
Just as before, any algebra $\mathcal{C}$ that contains $\B$ as a substructure is such that $\mathcal{C}\vDash\Gamma$.
Then, there is a unique surjective homomorphism from $\F_{\Gamma}$ to $\mathcal{C}$.
Since $\F_{\Gamma}$ also contains $\B$ as a substructure, then $\F_{\Gamma},\F(\B)$ are both initial objects in the category of algebras containing $\B$ as a substructure, and hence are isomorphic.
\end{proof}

\section{Algorithmic properties of almost free algebras}\label{sec:algprop}

\subsection{The equality problem (Kozen's algorithm)}\label{sec:kozen}
For completeness, we provide Kozen's algorithm for the equality problem here.
(It will be repeatedly used in the remainder of the paper.)
Given $\Gamma$ a finite set of ground term equations, consider the directed graph $R_{\Gamma}$ obtained by taking the disjoint union of $\Omega(p)$ where either $p=q\in\Gamma$ or $q=p\in\Gamma$ for some $q$, and replacing the label of each node $(\theta,\sigma)\in\Omega(p)$ with $(p,\theta,\sigma)$.
It is evident that $R_{\Gamma}$ can be produced in polynomial time in $|\Gamma|:=\sum_{p=q\in\Gamma}|p|+|q|$.

\begin{example}
Consider $\Gamma=\{a=fbc,c=fab\}$. Then $R_{\Gamma}$ will be the graph:
\begin{center}
\begin{tikzpicture}
\node (a) at (1,0) {$(a,a,\epsilon)$};
\node (fbc) at (3.5,0) {$(fbc,f,\epsilon)$};
\node (fbc0) at (2,-1) {$(fbc,b,0)$};
\node (fbc1) at (5,-1) {$(fbc,c,1)$};
\node (c) at (7,0) {$(c,c,\epsilon)$};
\node (fab) at (9.5,0) {$(fab,f,\epsilon)$};
\node (fab0) at (8,-1) {$(fab,a,0)$};
\node (fab1) at (11,-1) {$(fab,b,1)$};

\draw[->] (fbc) to (fbc0);
\draw[->] (fbc) to (fbc1);
\draw[->] (fab) to (fab0);
\draw[->] (fab) to (fab1);
\end{tikzpicture}
\end{center}
\end{example}

To determine whether $\mathcal F_{\Gamma}\models s=t$, we extend the graph $R_{\Gamma}$ by adjoining the trees $\Omega(s)$ and $\Omega(t)$, replacing each label $(\theta,\sigma)\in\Omega(s)$ (resp.~$\Omega(t)$) by $(s,\theta,\sigma)$ (resp.~$(t,\theta,\sigma)$).
Denote the resultant graph as $R_{\Gamma}(s,t)$.
It is easy to see that $R_{\Gamma}(s,t)$ can be produced in polynomial time in $|\Gamma|,|s|,|t|$.
The idea now is to `close' $R_{\Gamma}(s,t)$ under the axiom ``$\F_{\Gamma}\vDash fu_{1}\dots u_{\arity(f)}=fv_{1}\dots v_{\arity(f)}$ if $\F_{\Gamma}\vDash\bigwedge_{i<\arity(f)}u_{i}=v_{i}$'', and the transitivity of ``$=$'':
\begin{description}
\item[Step $0$] For each pair of nodes $(p,\theta,\sigma)$ and $(q,\xi,\tau)$, if $\Omega^{-1}(p,\sigma)=\Omega^{-1}(q,\tau)$ (as terms), or if one of $\Omega^{-1}(p,\sigma)=\Omega^{-1}(q,\tau)$ or $\Omega^{-1}(q,\tau)=\Omega^{-1}(p,\sigma)$ is contained in $\Gamma$, then add an undirected edge between the two nodes.

\item[Step $n$] For nodes $u,v,w$, if there is an undirected edge between $u,v$ and also between $v,w$, but no undirected edge between $u,w$, then add an undirected edge between $u,w$.

If nodes $(p,\theta,\sigma),(q,\xi,\tau)$ are such that $\theta=\xi=f$ and for each $l<\arity(f)$, there are undirected edges between $(p,\theta',\sigma^{\frown}l)$ and $(q,\xi',\tau^{\frown}l)$, but no undirected edge between $(p,\theta,\sigma)$ and $(q,\xi,\tau)$, then add one between them.

Terminate the algorithm at the stage where no new undirected edges are added.
\end{description}

Since the total number of possible undirected edges is quadratic in the total number of vertices, the algorithm above terminates in polynomial time in the size of $R_{\Gamma}(s,t)$ which is itself polynomial in $|\Gamma|,|s|,|t|$.
It is also not hard to see that if the algorithm ends with there being an edge between the nodes $(s,\theta,\epsilon)$ and $(t,\xi,\epsilon)$, then $\F_{\Gamma}\vDash s=t$.
We refer the reader to \cite{kozen77} for the converse.

\subsection{The canonical representative problem}\label{sec:min}
Recall that this is the problem of finding for each $\sim_{\Gamma}$-congruence class of $t$, a canonical representative $\rep(t)$ so that for any $s\sim_{\Gamma}t$, $\rep(s)=\rep(t)$ (as terms).
We now provide some key definitions that will be used in tackling the problems listed out in Section \ref{sec:contribution}.

\begin{definition}\label{def:type}
Let $\Gamma$ be a finite set of ground term equations.
Consider the graph $G_{\Gamma}=(V_{\Gamma},E_{\Gamma})$ where $V_{\Gamma}=\{p \mid  \exists q(p=q\in \Gamma \vee  q=p\in \Gamma)\}$, and $E_{\Gamma}=\{\{p,q\}\mid\F_{\Gamma}\vDash p=q\}$.
Then let $C_{1},\dots,C_{k}$ be the distinct connected components of $G_{\Gamma}$.
Then we say that a term $p\in V_{\Gamma}$ has {\bf type} $i$ if it is contained in $C_{i}$.
\end{definition}

By Kozen's algorithm, the process of obtaining the types of each term mentioned in $\Gamma$ (i.e., those terms $p\in V_{\Gamma}$) takes polynomial time in $|\Gamma|$.
In fact, one may extend the above definition to say that a term has {\bf type} $i$ if it is $\F_{\Gamma}$-equivalent to a term $p\in C_{i}$.
This is again polynomial time decidable in $|\Gamma|,|t|$ by Kozen's algorithm.

\begin{definition}\label{def:typedrep}
Let $\Gamma$ be a finite set of ground term equations which induces the types $1,\dots,k$.
Define the {\bf reduced typed representation} of $t$, denoted $r(t)\subseteq\big(\Sigma\cup\{1,\dots,k\}\big)\times\N^{<\N}$, as follows: 
Let $(\xi,\epsilon)\in r(t)$ where $\epsilon$ is the empty string and $(\xi,\epsilon)\in\Omega(t)$ (recall Definition \ref{def:tree}).
For $(\theta,\sigma)\in\Omega(t)$, let $(\theta,\sigma)\in r(t)$ if for all nonempty $\tau\preceq\sigma$, $\Omega^{-1}(t,\tau)$ has no type.
Otherwise, if $\Omega^{-1}(t,\sigma)$ has type $i$, and for each nonempty $\tau\prec\sigma$, $\Omega^{-1}(t,\tau)$ has no type, then let $(i,\sigma)\in r(t)$.
\end{definition}

Given Kozen's algorithm, it is easy to see that producing $r(t)$ takes polynomial time in $|t|$ and $|\Gamma|$.
We will also often refer to the nodes $\sigma$, or $v$, of the various representations, $\Omega(t),r(t)$ as the (sub)terms themselves.

\begin{theorem}\label{thm:cr}
There is a function $\rep$ computable in polynomial time that,  given $\Gamma$ a finite set of ground term equations and ground terms $p$ and $q$, outputs the terms $\rep(p)$ and $\rep(q)$ such that (1) $p\sim_{\Gamma} \rep(p)$, and (2) $p\sim_{\Gamma} q$ if and only if $\rep(p)=\rep(q)$.
\end{theorem}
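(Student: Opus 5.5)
The plan is to define $\rep$ by choosing, once and for all, a canonical term for each type, and then rebuilding the input bottom-up. For each type $i\in\{1,\dots,k\}$ of Definition \ref{def:type}, fix the representative $m_i$: the term in $C_i$ of least size, with ties broken by a fixed total order on terms, say lexicographic on Polish notation. Since $|C_i|\le|V_\Gamma|\le 2|\Gamma|$ and the components are computed by Kozen's algorithm (Section \ref{sec:kozen}), all the $m_i$ are obtained in polynomial time in $|\Gamma|$. Then define $\rep$ recursively on a ground term $t=ft_1\dots t_n$ (with $n=0$ for constants):
\begin{itemize}
\item if $t$ has a type $i$, decided by Kozen's algorithm against each $m_i$, set $\rep(t)=m_i$;
\item otherwise set $\rep(t)=f\,\rep(t_1)\dots\rep(t_n)$.
\end{itemize}
Computing this bottom-up over $\Omega(t)$ costs $|t|$ calls to a polynomial-time test, each on subterms of $t$ and on the terms $m_i$. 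The output has size at most $|t|\cdot\max_i|m_i|$, since each node is either kept or replaced by some $m_i$, and this is polynomial. It is essentially the reduced typed representation $r(t)$ of Definition \ref{def:typedrep} with the type labels expanded.

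Property (1), $t\sim_\Gamma\rep(t)$, follows by induction using the congruence property: $m_i\sim_\Gamma t$ in the typed case, and componentwise equivalence gives equivalence in the untyped case. For the direction ``$\rep(p)=\rep(q)\Rightarrow p\sim_\Gamma q$'' in (2), apply (1) twice. The real content is the forward direction: if $p\sim_\Gamma q$ then $\rep(p)=\rep(q)$. I would prove this by induction on $|p|+|q|$. If $p$ has type $i$, then so does $q$, since having a type is invariant under $\sim_\Gamma$, and both map to $m_i$. Note that the types are disjoint, because the $C_i$ are distinct $\sim_\Gamma$-classes: $G_\Gamma$ has an edge exactly between equivalent terms.

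If neither $p$ nor $q$ has a type, the key lemma to establish is:

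\emph{If $p=fp_1\dots p_n$ has no type and $p\sim_\Gamma q$, then $q=fq_1\dots q_n$ with $p_j\sim_\Gamma q_j$ for all $j$.}

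To see this, consider a rewriting chain $p\rightarrow_\Gamma\cdots\rightarrow_\Gamma q$, as in the Proposition of Section \ref{sec:almostfree}. Every intermediate term is equivalent to $p$, so none of them has a type. In particular, no step rewrites at the root: a root rewrite would make the whole term equal to some element of $V_\Gamma$, hence typed. So every step acts inside one argument position. The head symbol $f$ is therefore preserved, and the step-wise changes give chains $p_j\rightarrow^\star_\Gamma q_j$. Applying the induction hypothesis to each pair $(p_j,q_j)$ yields $\rep(p_j)=\rep(q_j)$, hence $\rep(p)=\rep(q)$.

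I expect the main obstacle to be this lemma, specifically making precise that ``no root rewrite'' follows from ``no type''. The point is that a root rewrite means the current term literally equals a term of $V_\Gamma$. Once that is stated carefully, the rest is bookkeeping and polynomial-time accounting.
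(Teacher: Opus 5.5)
Your proposal is correct and takes essentially the paper's approach. Both fix one representative per type and replace the maximal typed subterms; your recursion is exactly the paper's compute-$r(t)$-then-substitute algorithm. Both then prove $p\sim_\Gamma q\Rightarrow\rep(p)=\rep(q)$ by induction, splitting into the typed case and the untyped case, where the two terms share a head symbol and have pairwise equivalent arguments. Your no-root-rewrite argument along a rewriting chain justifies that decomposition, which the paper only asserts.
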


\begin{proof}
Consider the following algorithm that produces $\Omega(\rep(t))$, a tree representation for the canonical term of the $\sim_{\Gamma}$-congruence class of $t$:
\begin{enumerate}
\item Begin with $T$ as the empty tree, and fix terms $p_{1},p_{2},\dots,p_{k}$ of type $1,2,\dots,k$ respectively.
\item Check if $t$ has type $i$.
If so, then return $\Omega(p_{i})$.
Otherwise, compute $r(t)$ and let $T=r(t)$.
\item Now, for each leaf $(\theta,\sigma)$ of $T$, do the following.
If $\theta=i$ for some $i>0$, then let
$$
T=\big(T\setminus\{(i,\sigma)\}\big)\cup\{(\xi,\sigma^{\frown}\tau)\mid(\xi,\tau)\in\Omega(p_{i})\}.
$$
Otherwise do nothing for this leaf and proceed to the next.
\end{enumerate}
It is easy to see that $T$ is a valid tree representation of a term, and thus, we may easily extract $\rep(t)$ from $T$ as desired.
The algorithm above clearly runs in polynomial time in $|\Gamma|,|t|$.
Intuitively, given $t$, the algorithm searches for the maximal nodes which are equivalent to some typed term and replaces them with the tree representation of a representative term $p_{i}$ of type $i$ chosen by us.
We show that the algorithm is correct by way of the next lemma.
\end{proof}

\begin{lemma}\label{lem:canonicalrep}
If $\F_{\Gamma}\vDash s=t$, then $\rep(s)=\rep(t)$.
\end{lemma}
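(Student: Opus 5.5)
The plan is to prove the lemma by induction on $|s|$, using two structural observations about how $\rep$ is computed and how $\sim_\Gamma$ acts on terms that have no type.

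\emph{First observation: type is a congruence-class invariant.} If $u\sim_\Gamma p$ and $u\sim_\Gamma q$ with $p,q\in V_\Gamma$, then $p\sim_\Gamma q$. Hence $\{p,q\}\in E_\Gamma$, and $p,q$ lie in the same component $C_i$. So every term has at most one type, and $s\sim_\Gamma t$ implies that $s$ has type $i$ if and only if $t$ does. In particular, if $s$ has type $i$, then step (2) of the algorithm gives $\rep(s)=p_i=\rep(t)$.

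\emph{Second observation: untyped terms are never rewritten at the root.} Suppose $s$ has no type and $s\sim_\Gamma t$. By the Proposition there is a chain $s=u_0\rightarrow_\Gamma u_1\rightarrow_\Gamma\cdots\rightarrow_\Gamma u_m=t$, and every $u_j$ is $\sim_\Gamma s$. By the first observation, every $u_j$ is therefore untyped. A rewrite step at the root of $u_j$ would require $u_j$ to literally be a term $p'$ with $p'=q'$ or $q'=p'$ in $\Gamma$. Then $u_j\in V_\Gamma$ would be typed, which is impossible. So every step replaces a subterm strictly inside one argument. By induction along the chain, $t=ft_1\dots t_n$ where $s=fs_1\dots s_n$ and $s_l\sim_\Gamma t_l$ for each $l$. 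I expect this to be the main point needing care, although it is short.

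\emph{Third observation: $\rep$ commutes with the root of an untyped term.} If $s=fs_1\dots s_n$ is untyped, then $\rep(s)=f\,\rep(s_1)\dots\rep(s_n)$. To see this, unfold Definition \ref{def:typedrep}. The root of $r(s)$ is kept. For each child $s_l$ there are two cases. If $s_l$ has type $i$, the child is labelled $(i,l)$ in $r(s)$, and step (3) replaces it with $p_i$, which equals $\rep(s_l)$. If $s_l$ is untyped, the part of $r(s)$ below position $l$ is exactly $r(s_l)$ shifted by $l$, since the typing condition only concerns nonempty prefixes and those are the same relative to $s_l$. Hence after step (3) this part becomes $\rep(s_l)$.

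Combining the three observations gives the induction. If $s$ is typed, the first observation gives $\rep(s)=\rep(t)$. Otherwise, the second observation gives $t=ft_1\dots t_n$ with $s_l\sim_\Gamma t_l$. The induction hypothesis applies to the smaller terms $s_l$ and gives $\rep(s_l)=\rep(t_l)$. Since $t$ is also untyped, the third observation applies to both $s$ and $t$, so $\rep(s)=f\,\rep(s_1)\dots\rep(s_n)=f\,\rep(t_1)\dots\rep(t_n)=\rep(t)$. The base case of constants is contained in these two cases.
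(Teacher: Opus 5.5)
Your proof is correct and follows the paper's argument. It uses the same split into the typed case, where both terms share a type and both representatives are $p_i$, and the untyped case, where $s$ and $t$ have the same head symbol and argumentwise-equivalent subterms; it uses the same claim that $\rep$ commutes with the root of an untyped term, and the same induction (on $|s|$ rather than the paper's $\max\{\Delta(s),\Delta(t)\}$), while your second observation also justifies, via the absence of root rewrite steps, the decomposition that the paper only asserts.
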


\begin{proof}
We shall instead show the equivalent statement that if $\F_{\Gamma}\vDash s=t$, then $\Omega\big(\rep(s)\big)=\Omega\big(\rep(t)\big)$.
We first claim that for any ground term $t$, if $t=ft_{0}\dots t_{\arity(f)-1}$, and is such that $t$ has no type, then for each $i<\arity(f)$,
$$
\Omega\big(\rep(t_{i})\big)=\left\{(\theta,\sigma)\mid (\theta,i^{\frown}\sigma)\in\Omega\big(\rep(t)\big)\right\}.
$$
To see this, observe that to define $\rep(t)$, the algorithm picks out the maximal subterms of $t$ which are typed, and subsequently replacing the corresponding subterms with some chosen typed term.
By the assumption that $t$ has no type, it follows that any maximal subterm of $t$ which has a type is necessarily a maximal subterm of one of the $t_{i}$ which has a type and vice versa.
This proves the claim.

Now we proceed by induction on $\max\{\Delta(s),\Delta(t)\}$ to prove the lemma.
For the base case, we necessarily have that both $s,t$ are constants.
Then, if $\F_{\Gamma}\vDash s=t$, it must be that either $s,t$ are both already the same constant, or there is some sequence of replacements via $\Gamma$ which takes $s$ to $t$.
Then, we either have that $\rep(s)=s=t=\rep(t)$ (actually ``$=$'' and not just in $\F_{\Gamma}$), or $s,t$ necessarily have the same type, and so $\Omega\big(\rep(s)\big)=\Omega\big(\rep(t)\big)$.

Now suppose that $\max\{\Delta(s),\Delta(t)\}=n>0$ and that the statement holds for all pairs $s',t'$ with $\max\{\Delta(s'),\Delta(t')\}<n$.
Then either both $s,t$ have the same type, or neither are typed.
In the former case, we obtain that $\Omega\big(\rep(s)\big)=\Omega\big(\rep(t)\big)$ as desired.
In the latter, by the assumption that $\F_{\Gamma}\vDash s=t$, we must have that $s=fs_{0}s_{1}\dots s_{\arity(f)-1}$ and $t=ft_{0}t_{1}\dots t_{\arity(f)-1}$ and $\F_{\Gamma}\vDash\bigwedge_{i<\arity(f)}s_{i}=t_{i}$ as $s,t$ themselves have no type.
By induction, for each $i<\arity(f),\,\Omega\big(\rep(s_{i})\big)=\Omega\big(\rep(t_{i})\big)$.
Then applying our claim above, since neither of $s,t$ are typed, we have that
\begin{align*}
\Omega\big(\rep(s)\big)&=\{(f,\epsilon)\}\cup\left\{(\theta,i^{\frown}\sigma)\mid(\theta,\sigma)\in\Omega\big(\rep(s_{i})\big)\right\}\\
&=\{(f,\epsilon)\}\cup\left\{(\theta,i^{\frown}\sigma)\mid(\theta,\sigma)\in\Omega\big(\rep(t_{i})\big)\right\}\\
&=\Omega\big(\rep(t)\big)
\end{align*}
as desired.
\end{proof}

\subsection{The congruence class cardinality problem}\label{sec:size}
Recall that this is the problem of computing the size of the $\sim_{\Gamma}$-congruence class of a given term $t$.
We now define $R_{\Gamma}$ as in Section \ref{sec:kozen}, but with the additional data of types included in the nodes.
More formally, each label $(p,\theta,\sigma)$ is replaced by $(p,\theta,\sigma,i)$ if $\Omega^{-1}(p,\sigma)$ has type $i$, and $(p,\theta,\sigma,0)$ otherwise.
Additionally, for each $i>0$, also add undirected edges between each pair of distinct nodes with type $i$.
Let $R_{\Gamma}$ now denote this mixed graph.

\begin{example}
Consider $\Gamma=\{a=fbc,c=fab\}$. Then $R_{\Gamma}$ will be the graph:
\begin{center}
\begin{tikzpicture}
\node (a) at (1,0) {$(a,a,\epsilon,\mathbf{1})$};
\node (fbc) at (3.5,0) {$(fbc,f,\epsilon,\mathbf{1})$};
\node (fbc0) at (2,-1.2) {$(fbc,b,0,\mathbf{0})$};
\node (fbc1) at (5,-1.2) {$(fbc,c,1,\mathbf{2})$};
\node (c) at (7,0) {$(c,c,\epsilon,\mathbf{2})$};
\node (fab) at (9.5,0) {$(fab,f,\epsilon,\mathbf{2})$};
\node (fab0) at (8,-1.2) {$(fab,a,0,\mathbf{1})$};
\node (fab1) at (11,-1.2) {$(fab,b,1,\mathbf{0})$};

\draw[->] (fbc) to (fbc0);
\draw[->] (fbc) to (fbc1);
\draw[->] (fab) to (fab0);
\draw[->] (fab) to (fab1);

\draw[-,dashed] (a) to (fbc) to (fab0) to (a);
\draw[-,dashed] (c) to (fbc1) to (fab) to (c);
\end{tikzpicture}
\end{center}
The bit in the label representing the type has been bolded for disambiguation purposes. One easily sees that the congruence classes of both $a$ and $c$ are infinite.
For $a$, observe that $a\sim_{\Gamma}fbc\sim_{\Gamma}fbfab\sim_{\Gamma}fbffbcb\sim_{\Gamma}fbffbfabb\sim_{\Gamma}\cdots$.
\end{example}

The reader may already guess that checking for infinite congruence classes can thus be reduced to searching for certain cycles in $R_{\Gamma}$.
We formalize this notion below:

\begin{definition}
Let $\widehat{R_{\Gamma}}$ be the quotient (directed) graph with nodes $[v]$ representing the collection of nodes $w$ in $R_{\Gamma}$ connected to $v$ by an undirected edge.
For two nodes $[v],[w]$ of $\widehat{R_{\Gamma}}$, there is a directed edge with source $[v]$ and target $[w]$ if there is some $v'\in[v]$ and $w'\in[w]$ so that there is a directed edge with source $v'$ and target $w'$ in $R_{\Gamma}$.

We say that a type $i$ is {\bf cyclic} if there exists a directed cycle in $\widehat{R_{\Gamma}}$ that is reachable from a node with type $i$ by a directed path. 
Note that self-loops are included as cycles.
\end{definition}

\begin{theorem}\label{thm:size}
There is a polynomial time algorithm which on input $\Gamma$, a finite set of ground term equations, and $t$ a ground term, outputs the size of the $\sim_{\Gamma}$-congruence class of $t$ (possibly $\infty$).
\end{theorem}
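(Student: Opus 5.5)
The idea is to split the class of $t$ into its typed and untyped parts. The untyped part of $t$ is rigid, so $t$'s class is a product of the classes of its maximal typed subterms. Each typed class is either infinite, exactly when its type is cyclic, or finite with a size computable by a recursion over the acyclic part of $\widehat{R_{\Gamma}}$.

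\textbf{Reduction to types.} First compute $r(t)$ in polynomial time (Definition \ref{def:typedrep}). Consider the positions $\sigma$ with $(i,\sigma)\in r(t)$, i.e.\ the maximal typed subterms $u_\sigma$ of $t$. I would show, as in the claim inside the proof of Lemma \ref{lem:canonicalrep}, that any $s\sim_\Gamma t$ agrees with $t$ on the untyped skeleton of $r(t)$, and that its subterms at these positions are equivalent to the $u_\sigma$. The reason is that an untyped term can only be equal to terms with the same head symbol and argumentwise equivalent arguments. Conversely, every such choice of subterms yields an equivalent term. Hence $|[t]|=\prod_\sigma |[u_\sigma]|$. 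If $t$ itself is typed, then $|[t]|$ is the size of its type class. So it suffices to compute $N_i$, the cardinality of the union of classes of type $i$; this union is a single congruence class, since all terms of type $i$ are $\F_\Gamma$-equivalent. Then $|[t]|=\prod_\sigma N_{i_\sigma}$, with the product equal to $\infty$ if any factor is.

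\textbf{Infinite iff cyclic.} If type $i$ is cyclic, follow the directed path from a type-$i$ node to a cycle in $\widehat{R_{\Gamma}}$. Going around the cycle means: some term $p$ in a class $[v]$ has, at a proper subterm position, a term equivalent to another member of $[v]$. Substituting repeatedly, as in the example $a\sim fbc\sim fbfab\sim\cdots$, produces terms of unbounded height in the class reached. Transporting these along the path gives infinitely many distinct terms equivalent to a type-$i$ term. Conversely, if no cycle is reachable, the reachable subgraph of $\widehat{R_{\Gamma}}$ is a DAG. By the structural description above, any term of type $i$ is a member $p$ of some node of $C_i$ with its maximal typed proper subterms replaced by equivalent terms, and those replacements correspond to strictly lower nodes. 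Induction on depth in the DAG then shows the class is finite.

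\textbf{Counting in the acyclic case.} For a class node $[v]$ of $\widehat{R_{\Gamma}}$ (a type, or an untyped subterm node), define $N([v])$ recursively in topological order. For a type $i$, $N_i$ is the number of distinct top-level shapes: terms $p\in C_i$ (and heads $f w_1\dots w_n$ with argument classes $[w_j]$ arising from nodes of type $i$), each counted as $\prod_j N([w_j])$. Here one must take care that different $p$ with the same head and equivalent arguments are not double counted, so I would group shapes by head symbol and tuple of argument classes, which is decidable via Kozen's algorithm. Untyped nodes are handled by the product formula. All numbers are represented in binary, and a product over a polynomial-size DAG has polynomially many bits, so the computation runs in polynomial time.

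\textbf{Main obstacle.} The delicate step is proving that the class of a typed term has exactly the described structure, namely that every member of a type-$i$ class is, at its root, an instance of a node of $R_{\Gamma}$ in that class, so that the counting neither misses nor double counts terms. I would prove this by induction on the length of a $\rightarrow_\Gamma$ rewrite sequence, using the closure properties established by Kozen's algorithm.
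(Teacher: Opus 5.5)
Your structural argument follows the paper's approach. You reduce to the maximal typed subterms given by $r(t)$, detect $\infty$ via a cycle reachable in $\widehat{R_{\Gamma}}$, and otherwise count recursively over the acyclic part. Grouping one-level shapes by head symbol and tuple of argument classes is an equivalent alternative to the paper's grouping by distinct $r(v_i)$. Your rewrite-sequence argument for the "main obstacle" works (look at the last root rewrite). Your product $\prod_\sigma N_{i_\sigma}$ for untyped $t$ is also correct; the paper's last line says ``sum'', which is a slip.

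The genuine gap is the complexity claim that ``a product over a polynomial-size DAG has polynomially many bits''. This is false, because the DAG shares nodes and so can encode repeated squaring. Take $\Gamma_n=\{a_1=b\}\cup\{a_{k+1}=fa_ka_k : 1\le k<n\}$ with $f$ binary. Edges of $\widehat{R_{\Gamma_n}}$ only go from type $k+1$ to type $k$, so no type is cyclic and all classes are finite. Moreover $[a_1]=\{a_1,b\}$ and $[a_{k+1}]=\{a_{k+1}\}\cup\{fs_1s_2 : s_1,s_2\in[a_k]\}$. Hence $N_{k+1}=1+N_k^2$ and $N_n\ge 2^{2^{n-1}}$. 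The answer for $t=a_n$ therefore has at least $2^{n-1}$ binary digits, while $|\Gamma_n|=O(n)$. With a fixed signature, replace $a_k$ by $g^kc$ and $b$ by a constant $d$; then $|\Gamma_n|=O(n^2)$ and still $N_{k+1}\ge N_k^2$.

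So no algorithm can even write down the output in polynomial time. The statement is false as given in the standard bit model, and this is not something you can repair locally. What your memoised recursion does establish is the following:
\begin{itemize}
\item deciding in polynomial time whether the class is infinite;
\item computing the finite size with polynomially many additions and multiplications, equivalently outputting a polynomial-size arithmetic circuit for it;
\item genuine polynomial bit-complexity when, for example, all function symbols are at most unary, since then the counts are only singly exponential.
\end{itemize}
The paper's proof does not escape this either. Its bound (recursion depth at most the number of types, breadth at most $|\Gamma|$) counts recursive calls, not bit-lengths, and without memoisation even its call tree is exponential on $\Gamma_n$.
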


\begin{proof}
Since $\widehat{R_{\Gamma}}$ can evidently be produced in polynomial time in $R_{\Gamma}$, which is itself polynomial in $|\Gamma|$, checking if a type is cyclic (using depth-first search) is also polynomial in $|\Gamma|$.
Then, given a ground term $t$, checking if it contains a subterm of cyclic type is polynomial in $|\Gamma|,|t|$.
If $t$ contains a subterm of cyclic type, then let the witness of this property be the directed walk of nodes $[v_{0}],[v_{1}],\dots,[v_{m}],[v_{m+1}],\dots,[v_{n}]$, where $[v_{n}]=[v_{m}]$.
Recall that each node in $R_{\Gamma}$ represents some ground term.
Then, following the definition of $\widehat{R_{\Gamma}}$, each node $[v]$ represents some collection of terms which are $\F_{\Gamma}$-equivalent.
Similarly, a directed edge with source $[v]$ and target $[w]$ indicates that some term represented by $[w]$ is a strict subterm of a term represented by $[v]$.
In this way, one can easily generate infinitely many distinct ground terms which are each $\F_{\Gamma}$-equivalent to $t$.
This check evidently runs in polynomial time in $|\Gamma|,|t|$ using Kozen's algorithm to detect types.

\smallskip

If on the other hand, no subterm of $t$ is of cyclic type, then we may compute the size of its $\sim_{\Gamma}$-congruence class.
\begin{description}
\item[Computing $\#{[v]}$] Let $v_{0},\dots,v_{n}$ be the nodes of $R_{\Gamma}$ contained in $[v]$.
Compute $r(v_{i})$
for each $i$ and assume w.l.o.g.~that they are distinct.
For each $i$, let $w_{i_{0}},\dots,w_{i_{l_{i}}}$ be the nodes that are typed in $r(v_{i})$.
Note that these are the `closest' typed nodes to $v_{i}$ in $R_{\Gamma}$; in the directed path from $v_{i}$ to $w_{i_{j}}$, no nodes are typed except for $v_{i}$ and $w_{i_{j}}$.
Then taking the empty product to be $1$, define $\#[v]=\sum_{i\leq n}\prod_{j\leq l_{i}}\#[w_{i_{j}}]$.
\end{description}
To see why the algorithm terminates, the key observation is that only non-cyclic types are reachable from non-cyclic types in $\widehat{R_{\Gamma}}$.
Suppose w.l.o.g.~that $[v]$ is of type $1$.
Then, the collection of typed nodes that are reachable from one of $v_{i}\in[v]$ is necessarily not of type $1$, otherwise, $1$ must be a cyclic type.
Repeating the argument, we see that the collection of nodes which are reachable from the typed nodes $[w]$ which are reachable from $[v]$ must also have (non-cyclic) types distinct from $[w]$ and $[v]$.
Then, the maximum depth of the recursion is exactly the total number of types.
Additionally, the breadth of the recursion tree is also bounded by $|\Gamma|$ (the sum of $|p|+|q|$ for each $p=q\in\Gamma$) as each distinct $r(v_{i})$ has size bounded by a distinct term occurring in $\Gamma$.
Thus, the algorithm above is polynomial time computable in $|\Gamma|$.

Now we show that the algorithm is correct.
We will proceed by induction on the depth of the recursion, $d$, required to compute $\#[v]$ for a typed node $[v]$.
For the base case $d=0$, we have that no typed node is reachable from $[v]$ in $\widehat{R_{\Gamma}}$.
Then we evidently have that for each node $v_{i}\in[v]$ of $R_{\Gamma}$, (the term represented by) all strict subterms of $v_{i}$ are not $\F_{\Gamma}$-equivalent to any term mentioned in $\Gamma$.
Thus, the $\sim_{\Gamma}$-congruence class of $v$ is exactly $[v]$.

For typed nodes $[v]$ for which the computation $\#[v]$ requires a recursion depth $d>0$, let $v_{0},v_{1},\dots,v_{n}\in[v]$ be all the typed terms equivalent to $v$, such that $r(v_{i})$ and $r(v_{j})$ are distinct for $i\neq j$.
Now consider a leaf labeled with a type contained in $r(v_{i})$.
Note that for such a type, the depth of recursion required to compute $\#[w]$ for a node $w$ of such a type is strictly less than the depth required to compute $\#[v]$.
Then, by induction, the total number of terms $\F_{\Gamma}$-equivalent to $w$ is given by $\#[w]$.

Now let $t$ be a term that is $\F_{\Gamma}$-equivalent to $v$.
Observe that $r(t)$ demarcates all maximal strict subterms of $t$ that can be replaced to obtain other terms in the $\sim_{\Gamma}$-congruence class of $t$.
Then, for $t$ to be $\F_{\Gamma}$-equivalent to $v$, we necessarily have that $r(t)=r(v_{i})$ for some $i$.
Additionally, since these are distinct, for each $t$ $\F_{\Gamma}$-equivalent to $v$, there is exactly one $i$ so that $r(t)=r(v_{i})$.
Thus, counting the total number of terms $\F_{\Gamma}$-equivalent to $v$ reduces to counting the total number of terms $t$ such that $r(t)=r(v_{i})$ for each $i$.
This is then evidently equal the product of $\#[w]$ over the leafs of $r(v_{i})$ labeled with a type.
For those $v_{i}$ without such leaves, observe that if $r(t)=r(v_{i})$, then $t=v_{i}$, i.e., there is exactly one term $t$ for which $r(t)=r(v_{i})$.

\smallskip

To complete the proof of the converse, let $t$ a ground term be given such that none of its subterms are of cyclic type.
If $t$ has a type, then we are done by applying the algorithm above.
Otherwise, one easily sees that the size of the $\sim_{\Gamma}$-congruence class is given by the sum of $\#[v]$ over the leaves of $r(t)$ labeled with a type for nodes $v$ of the given type.
\end{proof}

\subsection{The intrinsic infinity problem}

Since the next two problems concern global properties of the algebra, it is reasonable to fix some signature $\Sigma$.
From Section \ref{sec:size}, we immediately obtain:

\begin{corollary}
Each $\sim_{\Gamma}$-congruence class is infinite if and only if each constant is of cyclic type.
Thus, for a fixed $\Sigma$, the intrinsic infinity problem is polynomial time decidable in $\Gamma$.
\end{corollary}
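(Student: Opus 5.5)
We prove the two directions of the equivalence separately, reading ``constant is of cyclic type'' as: the constant $c$ has some type $i$ (it is $\F_{\Gamma}$-equivalent to a term in $V_{\Gamma}$) and $i$ is cyclic. The algorithmic part then follows from Section \ref{sec:size}. Since $\Sigma$ is fixed, there are boundedly many constants. For each constant $c$ we decide in polynomial time, using Kozen's algorithm, whether $c$ has a type. The set of cyclic types is found by depth-first search in $\widehat{R_{\Gamma}}$, as in the proof of Theorem \ref{thm:size}.

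For the forward direction, suppose every congruence class is infinite and fix a constant $c$. By Theorem \ref{thm:size}, and more precisely by its proof, a term none of whose subterms has cyclic type has a finite class. The only subterm of $c$ is $c$ itself, so $c$ must have a cyclic type. We should check the degenerate case where $c$ has no type. Then no rewrite via $\Gamma$ applies to $c$ at all, because any rewrite would make $c$ equal to a side of an equation and hence give it a type. So the class of $c$ is $\{c\}$, which is finite, a contradiction.

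For the backward direction, let $t$ be an arbitrary ground term. We argue by induction on $\Delta(t)$. Every leaf of $t$ is a constant, hence a subterm of $t$ of cyclic type. By the first part of the proof of Theorem \ref{thm:size}, any term containing a subterm of cyclic type has infinitely many $\F_{\Gamma}$-equivalent terms. Concretely, the cyclic walk in $\widehat{R_{\Gamma}}$ lets us inflate that subterm in infinitely many distinct ways inside its own congruence class. Replacing the subterm inside $t$ by each of these inflations produces infinitely many distinct terms, all equivalent to $t$ because $\sim_{\Gamma}$ is a congruence. Hence every class is infinite. With this argument the induction on $\Delta(t)$ is not even needed, since the existence of a single cyclic-typed leaf already suffices.

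The main obstacle is making rigorous the step ``a subterm of cyclic type yields infinitely many equivalent terms.'' Theorem \ref{thm:size} only sketches this. To make it precise, I would take the walk $[v_{0}],\dots,[v_{m}],\dots,[v_{n}]=[v_{m}]$. Going around the cycle, one descends through strict subterms: at each edge the current term is replaced by an equivalent term of $\Gamma$ that has a strict subterm in the next class. Returning to $[v_{m}]$ therefore yields a term equivalent to the starting one that contains a proper subterm in the same class. Iterating this $k$ times gives terms of strictly increasing height, and hence pairwise distinct terms, all in the same class.
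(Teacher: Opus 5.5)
Your proposal is correct and follows the paper's own argument. Both directions reduce to the dichotomy from the proof of Theorem~\ref{thm:size} (a term has an infinite $\sim_{\Gamma}$-class exactly when it has a subterm of cyclic type), applied to a constant in one direction and to a constant leaf of an arbitrary term in the other; your direct check that an untyped constant has class $\{c\}$ and your explicit pumping along the walk in $\widehat{R_{\Gamma}}$ just spell out what the paper leaves implicit, and the announced induction on $\Delta(t)$ can be dropped, as you note yourself.
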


\begin{proof}
If all constants are of cyclic type, then every $\sim_{\Gamma}$-congruence class must be infinite.
Conversely, if there is some constant not of cyclic type, then the $\sim_{\Gamma}$-congruence class of this constant is finite.
\end{proof}

\subsection{The finiteness problem}
To facilitate the discussion, we define $ST(\Gamma)$ to be the collection of all subterms $u$ of $p$ where $p=q\in\Gamma$ or $q=p\in\Gamma$ for some $q$.
In order for $\F_{\Gamma}$ to be finite, the intuition is that all terms must eventually `collapse' to terms of smaller depth.
Thus, an algorithm to check if $\F_{\Gamma}$ is finite should search for terms which never `collapse'.
For instance, if there is some term $t$ that is not $\F_{\Gamma}$-equivalent to any term in $ST(\Gamma)$, then we claim that $ftt,\,ffttftt,\,\dots$ are pairwise non-$\F_{\Gamma}$-equivalent (assuming $f$ is binary).
The question then becomes: How many terms should we check?
To bound this search, we work up to $\F_{\Gamma}$-equivalence utilising Kozen's algorithm.
More formally, we prove:

\begin{theorem}\label{thm:finite}
$\F_\Gamma$ is finite if and only if $ST(\Gamma)$ satisfies the following:
(1) All constants from $\Sigma$ are $\F_{\Gamma}$-equivalent to some term in $ST(\Gamma)$; and (2) for every function symbol $f\in\Sigma$, and every $t_1,\dots,t_{\arity(f)}\in ST(\Gamma)$, there exists $u\in ST(\Gamma)$ such that $ft_{1}\dots t_{\arity(f)}\sim_{\Gamma}u$.
That is, $ST(\Gamma)$ is an algebra with signature $\Sigma$.
\end{theorem}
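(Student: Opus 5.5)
We prove the two directions separately; the backward direction is a routine induction, and the forward direction is where the real argument lies.

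\emph{Sufficiency.} Assume (1) and (2). We show by induction on the height $\Delta(t)$ that every ground term $t$ is $\F_{\Gamma}$-equivalent to some $u\in ST(\Gamma)$.
\begin{itemize}
\item If $t$ is a constant, this is exactly (1).
\item If $t=ft_{1}\dots t_{\arity(f)}$, the induction hypothesis gives $u_{i}\in ST(\Gamma)$ with $t_{i}\sim_{\Gamma}u_{i}$.
\item Since $\sim_{\Gamma}$ is a congruence, $t\sim_{\Gamma}fu_{1}\dots u_{\arity(f)}$.
\item By (2), the term $fu_{1}\dots u_{\arity(f)}$ is equivalent to some element of $ST(\Gamma)$.
\end{itemize}
Hence $|\F_{\Gamma}|\leq|ST(\Gamma)|$, which is finite.

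\emph{Necessity.} We argue by contraposition: assuming (1) or (2) fails, we exhibit infinitely many pairwise inequivalent terms. In either case there is a ground term $t$ equivalent to no member of $ST(\Gamma)$:
\begin{itemize}
\item if (1) fails, take $t$ to be the offending constant;
\item if (2) fails, take $t=ft_{1}\dots t_{\arity(f)}$ with $t_{i}\in ST(\Gamma)$ as in the failure.
\end{itemize}
If $\Sigma$ has no function symbol of positive arity, then $ST(\Gamma)$ consists of constants only and the algebra is trivially finite. We may also assume every constant lies in some class of $ST(\Gamma)$, since otherwise case (1) already supplies $t$. So fix a function symbol $g$ of arity $m\geq1$. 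Define $s_{0}=t$ and $s_{n+1}=gs_{n}\dots s_{n}$. We claim no $s_{n}$ is equivalent to a term of $ST(\Gamma)$, and that the $s_{n}$ are pairwise inequivalent.

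The key lemma is the following.
\begin{quote}
If $w=gw_{1}\dots w_{m}$ is equivalent to no term in $ST(\Gamma)$, then every term $w'\sim_{\Gamma}w$ has the form $gw'_{1}\dots w'_{m}$ with $w'_{i}\sim_{\Gamma}w_{i}$.
\end{quote}
To prove it, follow a rewrite chain $w\rightarrow_{\Gamma}\cdots\rightarrow_{\Gamma}w'$ (by the Proposition equating $\Gamma\vdash p=q$ with $p\rightarrow^{\star}_{\Gamma}q$). A rewrite applied at the root would replace the whole term by a side of an equation in $\Gamma$. That would make $w$ equivalent to a term in $ST(\Gamma)$, a contradiction. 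So every step rewrites inside some argument, and the root symbol $g$ persists throughout.

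With the lemma in hand, the claims follow.
\begin{itemize}
\item First, each $s_{n+1}$ is not equivalent to any $u\in ST(\Gamma)$, by induction on $n$. Suppose $s_{n+1}\sim_{\Gamma}u$. Consider a shortest chain from $s_{n+1}$ to $u$ and its first root rewrite. Before that step the term is $gv_{1}\dots v_{m}$ with $v_{i}\sim_{\Gamma}s_{n}$, and this term equals a side of an equation in $\Gamma$. Then each $v_{i}$ is a subterm of that side, so $v_{i}\in ST(\Gamma)$, contradicting the induction hypothesis for $s_{n}$. If no root rewrite occurs at all, then $u=gu_{1}\dots u_{m}$, and $ST(\Gamma)$ is subterm-closed, so $u_{i}\in ST(\Gamma)$ with $u_{i}\sim_{\Gamma}s_{n}$, again a contradiction.
\item Second, the $s_{n}$ are pairwise inequivalent. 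Suppose $s_{n}\sim_{\Gamma}s_{k}$ with $k>n$. Peeling off $g$ using the lemma $n$ times reduces this to $t\sim_{\Gamma}s_{k-n}$.
\begin{itemize}
\item If $t$ is a constant, apply the lemma to $s_{k-n}$: every term equivalent to it has root $g$, but $t$ does not.
\item Otherwise $t=ft_{1}\dots t_{\arity(f)}$, and the lemma applied to both sides forces $f=g$ and $t_{i}\sim_{\Gamma}s_{k-n-1}$. But $t_{i}\in ST(\Gamma)$, contradicting the first claim.
\end{itemize}
\end{itemize}

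The main obstacle is the lemma. Its proof needs care in exactly two places: showing that no intermediate term in a chain can be rewritten at the root, and using the fact that $ST(\Gamma)$ is closed under subterms. Both are handled above through the minimal-chain argument.
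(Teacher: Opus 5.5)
Your proof is correct. Its outline matches the paper's. Sufficiency is the same induction on height: the paper packages it as a map $\varphi\colon\F_{\Gamma}\to ST(\Gamma)$, and you package it as the bound $|\F_{\Gamma}|\le|ST(\Gamma)|$. For necessity you use the same witness and the same tower $s_{n+1}=gs_n\dots s_n$.

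The real difference is in how the tower is shown to give infinitely many classes. The paper assumes finiteness and gets $s_n\sim_{\Gamma}s_{n+p}$ by pigeonhole. It then argues from the height difference that some subterm of $s_{n+p}$ of height at least $p+\Delta(s)$ must lie in $ST(\Gamma)$. The remark that nontrivial equivalences must use $\Gamma$ does not by itself justify that step. Heights are not $\sim_{\Gamma}$-invariant: rewrites inside the copies of $s$, for instance inside its arguments $t_i\in ST(\Gamma)$, can change height arbitrarily. So the height count alone does not force a rewrite high in the tower.

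Your root-preservation lemma closes this. Combined with your inductive claim that no $s_n$ is equivalent to a member of $ST(\Gamma)$, it shows that the $g$-skeleton above the copies of $s$ is an invariant of the class. That gives pairwise inequivalence directly, with no pigeonhole, at the cost of a little more setup. Your route is in effect the rigorous version of the paper's sketch.

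One small correction concerns the case where $\Sigma$ has no symbol of positive arity. Observing that the algebra is then ``trivially finite'' does not dispose of that case in a proof of necessity. The statement is actually false there: take $\Sigma=\{c\}$ and $\Gamma=\emptyset$, so $\F_{\Gamma}$ is finite but (1) fails. This should be an explicit standing assumption. The paper tacitly makes the same assumption when it forms $s_{n+1}=fs_n\dots s_n$.
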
 

\begin{proof}
To show the forward implication, suppose that $ST(\Gamma)$ fails to satisfy (1) or (2).
Let $s$ be a term witnessing this failure, that is, $s$ is a term that is not $\F_{\Gamma}$-equivalent to any $u\in ST(\Gamma)$.
Consider the sequence of terms $\{s_n\}_{n\in\N}$ defined as follows: $s_0=s$, and $s_{n+1}=fs_n\dots s_n$.
If $\F_\Gamma$ is finite, then the sequence $s_0,s_1,s_2,\dots$ must be eventually periodic; there exists $m\in\N$ and $p>0$ such that for all $n\geq m$, $\F_{\Gamma}\vDash s_{n}=s_{n+p}$.
However, observe that for any term $t\neq u$ (as terms), if $\F_{\Gamma}\vDash t=u$, then there must be subterms of $t$ and $u$ which are contained in $ST(\Gamma)$.
(Any non-trivial proof of $\F_{\Gamma}$-equivalence must utilise equations from $\Gamma$.)
Since we evidently have that $s_{n}\neq s_{n+p}$ as terms and they respectively have heights $n+\Delta(s)$ and $n+p+\Delta(s)$, we obtain that some subterm of $s_{n+p}$ of height at least $p+\Delta(s)$ is contained in $ST(\Gamma)$, and thus, $s\in ST(\Gamma)$, a contradiction. 

\smallskip

For the converse, we show that if $ST(\Gamma)$ satisfies (1), (2) as in the statement of the theorem, then $ST(\Gamma)\cong\F_{\Gamma}$.
Consider the map $\varphi:\F_{\Gamma}\to ST(\Gamma)$ defined as follows:
For each constant symbol $c\in\Sigma$ define $\varphi(c)$ to be $u\in ST(\Gamma)$ so that $\F_{\Gamma}\vDash c=u$.
Such a $u$ must exist by (1).
Then we extend this map in the obvious way, letting $\varphi(ft_{0}\dots t_{\arity(f)-1})$ be the $u\in ST(\Gamma)$ that is $\F_{\Gamma}$-equivalent to $f\varphi(t_{0})\dots\varphi(t_{\arity(f)-1})$ which exists by (2).
It is easy to see that $\varphi$ is well-defined (up to $\F_{\Gamma}$-equivalence) and an isomorphism.
\end{proof}

Since $ST(\Gamma)$ can be produced in $O(|\Gamma|)$ time, then checking that $ST(\Gamma)$ satisfies the conditions takes $O\big(|\Gamma|^{k+1}\big)$ iterations of Kozen's algorithm, where $k$ is the maximum arity of function symbols in $\Sigma$.

\begin{corollary}
For a fixed $\Sigma$, the finiteness problem is polynomial time decidable.$\hfill\qed$
\end{corollary}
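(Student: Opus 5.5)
The statement is that the finiteness problem is polynomial-time decidable for fixed $\Sigma$. I will reduce it to Theorem \ref{thm:finite} and then count calls to Kozen's algorithm.

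By Theorem \ref{thm:finite}, $\F_\Gamma$ is finite exactly when conditions (1) and (2) hold for $ST(\Gamma)$. The algorithm first computes $ST(\Gamma)$ by walking the trees $\Omega(p)$ for $p$ occurring in $\Gamma$. This produces at most $|\Gamma|$ subterms, each of size at most $|\Gamma|$, in time polynomial in $|\Gamma|$.

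For condition (1), for each constant $c\in\Sigma$ and each $u\in ST(\Gamma)$, I run Kozen's algorithm of Section \ref{sec:kozen} on $R_\Gamma(c,u)$. That is $|\Sigma|\cdot|ST(\Gamma)|$ calls, each polynomial in $|\Gamma|$.

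For condition (2), for each $f\in\Sigma$ and each tuple $(t_1,\dots,t_{\arity(f)})\in ST(\Gamma)^{\arity(f)}$, I form the term $ft_1\dots t_{\arity(f)}$, of size at most $1+k|\Gamma|$ where $k$ is the maximal arity. I then test it against every $u\in ST(\Gamma)$ with Kozen's algorithm. The number of calls is bounded by $|\Sigma|\cdot|\Gamma|^{k+1}$. Since $\Sigma$ is fixed, $k$ and $|\Sigma|$ are constants, so this is polynomial. Each call is polynomial in $|\Gamma|$ and the size of the terms involved, which is $O(|\Gamma|)$. The algorithm answers ``finite'' iff every test in (1) and (2) succeeds.

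The correctness of the decision is exactly Theorem \ref{thm:finite}, so nothing further is needed there. The only point requiring care is that Kozen's algorithm is both sound and complete. Soundness is argued in Section \ref{sec:kozen} and completeness is cited from \cite{kozen77}, so a negative answer really means $ft_1\dots t_{\arity(f)}\not\sim_\Gamma u$.

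The step I would flag as the main obstacle is the dependence on fixing $\Sigma$. The exponent $k+1$ comes from the arity, so without fixing $\Sigma$ the enumeration is exponential. The corollary's hypothesis exists precisely to remove this issue. A possible refinement, though not needed here, is to work with $\sim_\Gamma$-classes of $ST(\Gamma)$ rather than individual terms. One would compute the classes once with a single Kozen closure on the union graph and pick one representative per class. This reduces the count but keeps the same polynomial form.
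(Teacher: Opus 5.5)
Your proposal is correct and is the paper's own argument: reduce to Theorem~\ref{thm:finite}, compute $ST(\Gamma)$ in linear time, and check conditions (1) and (2) with $O(|\Gamma|^{k+1})$ calls to Kozen's algorithm, which is polynomial because the maximal arity $k$ is fixed with $\Sigma$. One caveat, inherited from the paper, is the degenerate case where $\Sigma$ has no function symbols, since the forward direction of Theorem~\ref{thm:finite} needs some $f$ of positive arity (e.g.\ $\Sigma=\{a,b\}$, $\Gamma=\{a=a\}$ violates (1) although $\F_\Gamma$ is finite), so in that case the algorithm should simply answer ``finite''.
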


\subsection{The isomorphism problem}\label{sec:iso}
Recall from Theorem \ref{thm:char} that each $\F_{\Gamma}$ is a free algebra over some finite partial algebra $\B$.
Thus, the natural approach to checking if $\F_{\Gamma_{1}}\cong\F_{\Gamma_{2}}$ would be to attempt to extract these finite partial algebras $\B_{1},\B_{2}$ and checking if they are isomorphic.
It is easy to that if $\B_{1}\cong\B_{2}$, then $\F(\B_{1})\cong\F(\B_{2})$.
While the converse does not hold in general, we show:
\begin{theorem}\label{thm:iso}
Given $\Sigma,\Gamma_{1},\Gamma_{2}$, there exists finite partial algebras $\B_{1},\B_{2}$ so that $\F_{\Gamma_{1}}\cong\F_{\Gamma_{2}}$ if and only if $\B_{1}\cong\B_{2}$.
\end{theorem}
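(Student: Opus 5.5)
The plan is to extract from each $\F_{\Gamma}$ a finite partial algebra that is defined \emph{intrinsically}, i.e.\ invariantly under isomorphism, instead of one depending on the presentation as in the proof of Theorem \ref{thm:char}. For an algebra $\A$ and $x\in A$, call a \emph{decomposition} of $x$ either a constant symbol $c$ with $c^{\A}=x$, or a pair $(f,\bar b)$ with $f^{\A}(\bar b)=x$. Let $B(\A)$ be the least subset of $A$ with two closure properties. First, it contains every $x$ whose number of decompositions is not exactly one, and every $x$ that has a constant as a decomposition. Second, it is closed downward: if $x\in B(\A)$ and $(f,\bar b)$ is a decomposition of $x$, then every entry of $\bar b$ is in $B(\A)$. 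We make $B(\A)$ into a partial algebra $\B(\A)$ by declaring $f(\bar b)$ defined iff $\bar b\subseteq B(\A)$ and $f^{\A}(\bar b)\in B(\A)$. For $i=1,2$ we set $\B_i=\B(\F_{\Gamma_i})$.

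Since $\B(\A)$ is defined from $\A$ alone, any isomorphism $\F_{\Gamma_1}\to\F_{\Gamma_2}$ maps $B(\F_{\Gamma_1})$ onto $B(\F_{\Gamma_2})$ and restricts to an isomorphism $\B_1\cong\B_2$. Conversely, if $\B_1\cong\B_2$ then $\F(\B_1)\cong\F(\B_2)$, which is the easy direction already noted before the theorem. So it remains to prove two things.
\begin{enumerate}
\item[(a)] $B(\F_\Gamma)$ is finite, and in fact contained in the set of classes of terms in $ST(\Gamma)$ together with the constants.
\item[(b)] $\F_\Gamma\cong\F(\B(\F_\Gamma))$.
\end{enumerate}

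For (a), I will show that if a class $[t]$ does not contain a term of $ST(\Gamma)$ and $t$ is not a constant, then $[t]$ has exactly one decomposition. Write $t=ft_1\dots t_n$ and suppose $[t]=[gs_1\dots s_m]$. Consider a rewriting sequence $gs_1\dots s_m\rightarrow^\star_\Gamma t$. No step can rewrite at the root, because the root term would then lie in $ST(\Gamma)$. Hence every step acts inside an argument, which forces $g=f$ and $s_i\sim_\Gamma t_i$, so the decomposition is unique.

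For the downward closure, suppose $[p]$ with $p\in ST(\Gamma)$ has a decomposition $(f,[\bar b])$. The same rewriting argument shows that either the arguments $\bar b$ are equivalent to the arguments of a term in $[p]$ whose root has been touched by $\Gamma$, and those arguments are in $ST(\Gamma)$, or no root rewrite occurs, in which case we recurse on the arguments. A short induction should therefore keep everything inside classes of $ST(\Gamma)$ plus constants. This also makes $\B_i$ computable in polynomial time by Kozen's algorithm, although the theorem as stated does not require that.

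For (b), apply the Fact for $\F(\B)$: since $\B(\F_\Gamma)$ is a substructure of $\F_\Gamma$, there is a surjective homomorphism $h:\F(\B)\to\F_\Gamma$. To prove $h$ injective, take $\B$-terms $u,v$ with $h(u)=h(v)$ and induct on their size.
\begin{itemize}
\item If $h(u)\in B$, then a $\B$-term outside $B$ never maps into $B$. This holds because a $\B$-term $fu_1\dots u_n\notin B$ has $f(h\bar u)$ either with some argument outside $B$, or undefined in $\B$ and hence with value outside $B$. In that case $u,v\in B$ and $u=v$.
\item If $h(u)\notin B$, then $h(u)$ has a unique decomposition. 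Hence $u=fu_1\dots u_n$ and $v=fv_1\dots v_n$ with $h(u_i)=h(v_i)$, and induction gives $u=v$.
\end{itemize}

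I expect the main obstacle to be (a), specifically the downward-closure step: verifying that the closure process cannot escape the finite set of classes meeting $ST(\Gamma)$ plus constants. That requires a careful rewriting and normal-form argument, essentially the subterm analysis already used in the proof of Theorem \ref{thm:finite}. I would try to phrase it via $r(t)$ and types, arguing that untyped nodes have rigid, unique decompositions.
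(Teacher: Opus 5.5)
Your proof is correct, and it takes a genuinely different route from the paper's.

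The paper does not extract an invariant from each algebra separately. It builds both partial algebras on the common finite term set $B=C\cup ST(\Gamma_1\cup\Gamma_2)$: $\B_i$ consists of the $\sim_{\Gamma_i}$-classes of $B$, with $f$ defined exactly when the value stays among these classes. It obtains $\F_{\Gamma_i}\cong\F(\B_i)$ from initiality, since any algebra containing $\B_i$ satisfies $\Gamma_i$ (every subterm of $\Gamma_i$ lies in $B$). The forward direction comes from the rigidity of $\bar c$-generated algebras. An isomorphism must send $[t]_{\Gamma_1}$ to $[t]_{\Gamma_2}$ for every term $t$, so it restricts to $\B_1\to\B_2$ precisely because both are built on the same terms.

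Your decomposition-defined $B(\A)$ is isomorphism-invariant, so the forward direction is automatic. You pay for this with the rewriting lemma (a) and with the direct injectivity argument in (b), which replaces initiality. In exchange you get a canonical invariant: $\B(\F_\Gamma)$ depends only on the algebra, not on the presentation it is being compared with. The paper's choice feeds directly into Lemma \ref{lem:iso}, reducing the isomorphism test to checking $u\sim_{\Gamma_1}t\iff u\sim_{\Gamma_2}t$ over $B$. With your $\B_i$ you would still have to compute both cores and compare them before getting the algorithm.

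The step you flagged as the main obstacle is short, and no recursion is needed. Suppose $(f,\bar b)$ decomposes $[p]$ with $p\in ST(\Gamma)$. Pick $s_i\in b_i$ and a rewriting sequence from $fs_1\dots s_n$ to $p$. There are two cases:
\begin{itemize}
\item Its first root step is applied to a term $fs_1'\dots s_n'$ with $s_i'\sim_\Gamma s_i$ that is a side of an equation of $\Gamma$, so $s_i'\in ST(\Gamma)$.
\item There is no root step, so $p=fp_1\dots p_n$ with $p_i\sim_\Gamma s_i$ and $p_i\in ST(\Gamma)$.
\end{itemize}
A constant class not meeting $ST(\Gamma)$ is a singleton with no compound decomposition. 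Hence the classes of $ST(\Gamma)\cup C$ already form a downward closed set containing all seeds, which gives (a).
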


\begin{proof}
We first formally define the partial algebras $\B_{1},\B_{2}$.
For a fixed signature $\Sigma$, let $C$ be the set of all constants in $\Sigma$ and $B=C\cup ST(\Gamma_1\cup\Gamma_2)$.
We define the partial algebra induced by $\Gamma_{1}$ as
$\B_1=(B_{1};\{f\}_{f \in \Sigma})$ as follows: 
\begin{enumerate}
    \item $B_{1}=\{[t]_{\Gamma_1}\mid t\in B\}$,
    where $[t]_{\Gamma_1}$ denotes the $\sim_{\Gamma_1}$-congruence class of the term $t$.
    \item For each function symbol $f\in\Sigma$, the partial operation $f^{\B_{1}}: B_{1}^{\arity(f)}\rightharpoonup B_{1}$ is defined as $$f([t_1]_{\Gamma_1},\dots,[t_{\arity(f)}]_{\Gamma_1})^{\B_{1}}
    :=\begin{cases}
    [ft_{1}\dots t_{\arity(f)}]_{\Gamma_1}&\text{if }[ft_{1}\dots t_{\arity(f)}]_{\Gamma_{1}}\in B_{1}\\
    \text{undefined}& \text{otherwise.}
    \end{cases}$$
\end{enumerate}
We define the partial algebra $\B_{2}=(B_{2};\{f\}_{f\in\Sigma})$ induced by $\Gamma_{2}$ mutatis mutandis.

If $\B_{1}\cong\B_{2}$, then we evidently have that $\F(\B_{1})\cong\F(\B_{2})$.
We now show that $\F_{\Gamma_{i}}\cong\F(\B_{i})$ for $i=1,2$.

\begin{claim}
$\F_{\Gamma_{i}}$ is the initial object in the category $\mathscr{C}$ of all algebras with signature $\Sigma$ and containing $\B_{i}$ as a substructure, with arrows given by surjective homomorphisms.
That is, $\F_{\Gamma_{i}}\cong\F(\B_{i})$.
\end{claim}

\begin{proof}[Proof of claim]
By definition of $\B_{i}$, we have that $\B_{i}$ is a substructure of $\F_{\Gamma_{i}}$ and thus $\F_{\Gamma_{i}}\in\mathscr{C}$.
Observe that for any $\A\in\mathscr{C}$, $\A\vDash\Gamma_{i}$, and so by the universal property of $\F_{\Gamma_{i}}$, there exists a unique surjective homomorphism from $\F_{\Gamma_{i}}$ to $\A$, and so $\F_{\Gamma_{i}}$ is initial in $\mathscr{C}$.
Since both $\F_{\Gamma_{i}}$ and $\F(\B_{i})$ are initial in $\mathscr{C}$, they must be isomorphic.
\end{proof}
Thus, if $\B_{1}\cong\B_{2}$, then $\F_{\Gamma_{1}}\cong\F(\B_{1})\cong\F(\B_{2})\cong\F_{\Gamma_{2}}$.

\smallskip

For the converse, assume that $\varphi:\F_{\Gamma_1}\to\F_{\Gamma_2}$ is an isomorphism.
Then for every constant symbol $c\in\Sigma$, $\varphi([c]_{\Gamma_1})=[c]_{\Gamma_2}$.
By induction (see below), one can prove that $\varphi([t]_{\Gamma_1})=[t]_{\Gamma_2}$ holds for each $t\in B$.
Thus the restriction
$\varphi\restriction\B_1$ is an isomorphism between $\B_{1},\B_{2}$.
\end{proof}

\begin{lemma}\label{lem:iso}
$\B_{1}\cong\B_{2}$ if and only if for each pair $u,t\in C\cup ST(\Gamma_{1}\cup\Gamma_{2})$, $u\sim_{\Gamma_1}t$ if and only if $u\sim_{\Gamma_2}t$. 
\end{lemma}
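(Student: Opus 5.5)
Write $B=C\cup ST(\Gamma_{1}\cup\Gamma_{2})$ as in the proof of Theorem \ref{thm:iso}. The isomorphism $\B_1\cong\B_2$ should be understood as fixing the constants. The paper treats algebras as $\bar c$-generated structures, and the proof of Theorem \ref{thm:iso} uses $\varphi([c]_{\Gamma_1})=[c]_{\Gamma_2}$, so this is the intended reading. Given that, the only natural candidate map is $\psi:[t]_{\Gamma_1}\mapsto[t]_{\Gamma_2}$ for $t\in B$. The lemma then says that $\psi$ is a well-defined bijection preserving the partial operations exactly when $\sim_{\Gamma_1}$ and $\sim_{\Gamma_2}$ agree on $B$.

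\emph{Backward direction.} Assume that $u\sim_{\Gamma_1}t\iff u\sim_{\Gamma_2}t$ for all $u,t\in B$.
\begin{enumerate}
\item The forward implication gives that $\psi$ is well defined. The backward implication gives that $\psi$ is injective. Surjectivity is clear, since both $B_1$ and $B_2$ are indexed by $B$.
\item For operations, take $t_1,\dots,t_k\in B$ with $k=\arity(f)$. By definition, $f^{\B_1}$ is defined on $([t_j]_{\Gamma_1})_j$ iff there is $u\in B$ with $ft_1\dots t_k\sim_{\Gamma_1}u$, and then the value is $[u]_{\Gamma_1}$.
\item The obstacle is that the term $ft_1\dots t_k$ need not itself lie in $B$, so the hypothesis cannot be applied to it directly. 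I would handle this with a closure observation: if $ft_1\dots t_k\sim_{\Gamma}u$ with $t_j,u\in B$, then either this term equals $u$ syntactically, or it is provably equal to a term whose top symbol is constrained by an equation of $\Gamma$.
\item More concretely, I would use Kozen's congruence closure of Section \ref{sec:kozen}, run on the node set $B$ together with the extra node $ft_1\dots t_k$. In that closure, $ft_1\dots t_k$ gets linked to $u$ only through a node $fu_1\dots u_k\in B$ with $u_j\sim t_j$ for each $j$. The reason is that the only equations available are those of $\Gamma$, all of which relate terms in $ST$, and congruence steps at the root produce exactly such a node.
\item So $ft_1\dots t_k\sim_{\Gamma_1}u$ iff there exist $fu_1\dots u_k\in B$ with $u_j\sim_{\Gamma_1}t_j$ for all $j$ and $fu_1\dots u_k\sim_{\Gamma_1}u$. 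Every equivalence in this characterization is between elements of $B$, so by the hypothesis it transfers to $\Gamma_2$. Hence $\psi$ preserves both definedness and values of the partial operations.
\end{enumerate}

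\emph{Forward direction.} Suppose $\chi:\B_1\to\B_2$ is an isomorphism fixing the constants. I would prove by induction on the height of $t\in B$ that $\chi([t]_{\Gamma_1})=[t]_{\Gamma_2}$.
\begin{enumerate}
\item Constants are fixed by assumption.
\item Let $t=ft_1\dots t_k\in B$. Since $B$ is closed under subterms, each $t_j\in B$. The operation $f^{\B_1}$ is defined at $([t_j]_{\Gamma_1})_j$ with value $[t]_{\Gamma_1}$, because $t$ itself lies in $B$.
\item Since $\chi$ preserves partial operations, and using the induction hypothesis on the $t_j$, we get $\chi([t]_{\Gamma_1})=f^{\B_2}([t_1]_{\Gamma_2},\dots,[t_k]_{\Gamma_2})=[t]_{\Gamma_2}$.
\item Thus $\chi$ coincides with $\psi$. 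Since $\chi$ is injective, $u\sim_{\Gamma_1}t\iff u\sim_{\Gamma_2}t$ for all $u,t\in B$.
\end{enumerate}
This same induction fills in the ``by induction (see below)'' step of Theorem \ref{thm:iso}.

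The main obstacle is step 4 of the backward direction: justifying the root-level characterization of $ft_1\dots t_k\sim_\Gamma u$ when the term lies outside $B$. I would argue it from the rewriting description $\rightarrow^\star_\Gamma$ and the Proposition equating it with provability. Consider a rewriting sequence from $ft_1\dots t_k$ to $u$. If it never rewrites at the root, then $u$ has the form $fu_1\dots u_k$ with $u_j\sim t_j$, and $u\in B$. Otherwise, consider the first root rewrite. It applies to a term $fu_1\dots u_k$ that is the left or right side of an equation of $\Gamma$, hence lies in $B$, and the subterm-wise equivalences $u_j\sim t_j$ hold up to that point.
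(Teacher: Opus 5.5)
Your proof is correct and takes the paper's approach. The forward direction is the same induction showing that any isomorphism must send $[t]_{\Gamma_1}$ to $[t]_{\Gamma_2}$ for $t\in C\cup ST(\Gamma_1\cup\Gamma_2)$, the backward direction uses the same map $[s]_{\Gamma_1}\mapsto[s]_{\Gamma_2}$, and your first-root-rewrite argument (showing that $ft_1\dots t_k\sim_{\Gamma}u$ is always witnessed by some $fu_1\dots u_k$ in that set, so that definedness of the partial operations transfers even when $ft_1\dots t_k$ lies outside it) correctly supplies the verification the paper leaves as ``one can verify''.
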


\begin{proof}
For the converse, if for each pair $u,t\in C\cup ST(\Gamma_{1}\cup\Gamma_{2})$, $u\sim_{\Gamma_1}t$ if and only if $u\sim_{\Gamma_2}t$, one can verify that $\varphi:[s]_{\Gamma_1}\in\B_{1}\mapsto[s]_{\Gamma_2}\in\B_{2}$ is a well-defined isomorphism.

Now suppose that $\varphi:\B_{1}\to\B_{2}$ is an isomorphism.
We instead show that for any $[u]_{\Gamma_{1}}\in\B_{1}$, $\varphi([u]_{\Gamma_{1}})=[u]_{\Gamma_{2}}$.
Since $\varphi$ is an isomorphism, we must have that for any constant $c_{i}$, $\varphi([c_{i}]_{\Gamma_{1}})=[c_{i}]_{\Gamma_{2}}$.
Now let $u=fu_{1}\dots u_{\arity(f)}$ be given such that $[u]_{\Gamma_{1}}\in\B_{1}$.
Then
\begin{align*}
\varphi([u]_{\Gamma_{1}})&=\varphi([fu_{1}\dots u_{\arity(f)}]_{\Gamma_{1}})\\
&=\varphi\big(f([u_{1}]_{\Gamma_{1}},\dots,[u_{\arity(f)}]_{\Gamma_{1}})^{\B_{1}}\big)&&(\text{by definition of }\B_{1})\\
&=f(\varphi([u_{1}]_{\Gamma_{1}}),\dots,\varphi([u_{\arity(f)}]_{\Gamma_{1}}))^{\B_{2}}&&(\varphi\text{ is an isomorphism})\\
&=f([u_{1}]_{\Gamma_{2}},\dots,[u_{\arity(f)}]_{\Gamma_{2}})^{\B_{2}}&&(\text{by induction})\\
&=[fu_{1}\dots u_{\arity(f)}]_{\Gamma_{2}} &&(\text{by definition of }\B_{2}).
\end{align*}
Thus, we have that for any $[u]_{\Gamma_{1}}\in\B_{1}$, $\varphi([u_{i}]_{\Gamma_{1}})=[u_{i}]_{\Gamma_{2}}$.
By applying the assumption that $\varphi$ is bijective, we then obtain that $u_{i}\sim_{\Gamma_{1}}u_{j}$ if and only if $u_{i}\sim_{\Gamma_{2}}u_{j}$ as desired.
\end{proof}

The sizes of $\B_{1}$ and $\B_{2}$ are each $O(|\Sigma|+|\Gamma_1|+|\Gamma_2|)$.
By Lemma \ref{lem:iso}, and Kozen's algorithm, checking if $\B_{1}\cong\B_{2}$ is polynomial time in $|\Gamma_{1}|,|\Gamma_{2}|,|u|,|t|$.
Thus we obtain:
\begin{corollary}
The isomorphism problem is polynomial time decidable in $|\Sigma|,|\Gamma_{1}|,|\Gamma_{2}|$.$\hfill\qed$
\end{corollary}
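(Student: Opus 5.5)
The statement to prove is the final Corollary: the isomorphism problem is polynomial time decidable in $|\Sigma|,|\Gamma_{1}|,|\Gamma_{2}|$. The plan is to chain Theorem \ref{thm:iso} with Lemma \ref{lem:iso}. Together they reduce $\F_{\Gamma_{1}}\cong\F_{\Gamma_{2}}$ to a finite list of equality tests in the two algebras. Each test is then settled by Kozen's algorithm from Section \ref{sec:kozen}.

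Concretely, the algorithm runs as follows.
\begin{enumerate}
\item Form the set $B=C\cup ST(\Gamma_{1}\cup\Gamma_{2})$. This takes $O(|\Sigma|+|\Gamma_{1}|+|\Gamma_{2}|)$ time, and every $u\in B$ has size at most $\max(|\Gamma_{1}|,|\Gamma_{2}|,1)$.
\item For each of the $O(|B|^{2})$ pairs $u,t\in B$, run Kozen's algorithm twice: once to decide $u\sim_{\Gamma_{1}}t$ and once to decide $u\sim_{\Gamma_{2}}t$.
\item Accept if and only if the two answers agree for every pair.
\end{enumerate}

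For correctness, Theorem \ref{thm:iso} gives $\F_{\Gamma_{1}}\cong\F_{\Gamma_{2}}$ iff $\B_{1}\cong\B_{2}$. Lemma \ref{lem:iso} gives $\B_{1}\cong\B_{2}$ iff the relations $\sim_{\Gamma_{1}}$ and $\sim_{\Gamma_{2}}$ agree on $B$. Together these two equivalences say exactly that the algorithm accepts iff the algebras are isomorphic.

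For the running time, each Kozen call is polynomial in $|\Gamma_{j}|+|u|+|t|$, which is polynomial in the input size. It is called polynomially many times, so the whole procedure is polynomial.

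The main point to watch is hidden in the converse direction of Theorem \ref{thm:iso}. That direction uses the claim that any isomorphism $\varphi$ sends $[t]_{\Gamma_{1}}$ to $[t]_{\Gamma_{2}}$. This holds by induction on terms, because $\varphi$ fixes the constants and commutes with the operations. Since the claim is already in place in Theorem \ref{thm:iso}, nothing further is needed for the corollary beyond the complexity bookkeeping above.
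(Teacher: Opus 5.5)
Your proposal is correct and follows essentially the same argument as the paper. Theorem \ref{thm:iso} together with Lemma \ref{lem:iso} reduces $\F_{\Gamma_{1}}\cong\F_{\Gamma_{2}}$ to checking that $\sim_{\Gamma_{1}}$ and $\sim_{\Gamma_{2}}$ agree on the $O(|\Sigma|+|\Gamma_{1}|+|\Gamma_{2}|)$-element set $C\cup ST(\Gamma_{1}\cup\Gamma_{2})$, and each of the quadratically many resulting tests is settled by Kozen's algorithm in polynomial time; your bookkeeping is a more explicit version of the paper's one-line complexity count.
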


\section{Quantifier elimination for almost free algebras}\label{sec:qe}

\begin{definition}\label{def:is}
For a function symbol $f$, define the predicate $\is{f}$ as follows.
If $a\in\B$, then define $\is{f}(a)=\bot$.
If $a\notin\B$, then define $\is{f}(a)=\top$ if $a=ft_{1}\dots t_{\arity(f)}$ for some $\F(\B)$-terms $t_{1},\dots,t_{\arity(f)}$, and define $\is{f}(a)=\bot$ otherwise.
\end{definition}

Let $\F(\B)^{*}$ be the expansion of $\F(\B)$ with the {\bf tester predicates} $\is{f}$ for each function symbol $f$ in the language of $\B$.

\begin{definition}\label{def:standardformula}
A formula is {\bf special} if it is of the following form:
$$
\exists y_{0}\dots y_{m}\left(\bigwedge_{i}x_{\alpha_{i}}=t_{\alpha_{i}}\wedge\bigwedge_{j}x_{\beta_{j}}\neq t_{\beta_{j}}\wedge\bigwedge_{k}y_{\gamma_{k}}\neq t_{\gamma_{k}}\wedge\bigwedge_{l}\is{f_{\delta_{l}}}(y_{\varepsilon_{l}})\wedge\bigwedge_{r}\neg\is{f_{\xi_{r}}}(y_{\chi_{r}})\right),
$$
where each $x_{\alpha_{i}}$ occurs exactly once in the formula, and the $t$'s are $\F(\B)$-terms.
Additionally, in each equation $z=t$ or non-equation $z\neq t$, where $z$ is a variable, $z$ does not occur in $t$.

We say that a formula is {\bf standard} if it is constructed from special formulas, quantifier-free $\F(\B)^{*}$-formulas, and the logical connectives $\vee,\wedge$.
\end{definition}

The bulk of the proof will be to show that all $\F(\B)^{*}$-formulas can be rewritten into an equivalent standard formula.
This will be split into two main steps: (1) Lemma \ref{lem:eformula} will show that all existential formulas are standard; (2) and Lemma \ref{lem:negstandard} shows that standard formulas are closed under negation.
With these two lemmas, given any $\F(\B)^{*}$-formula of the form $\forall y\varphi$ where $\varphi$ is standard (i.e, may not be quantifier-free), we obtain that $\neg(\exists y\neg\varphi)$ is again standard by applying Lemma \ref{lem:negstandard}, \ref{lem:eformula}, and \ref{lem:negstandard} in order.

\begin{lemma}\label{lem:eformula}
Let $\varphi$ be a formula constructed from quantifier-free $\F(\B)^{*}$-formulas, the logical connectives $\vee,\wedge$, and the existential quantifier.
Then $\varphi$ is $\F(\B)^{*}$-equivalent to a standard formula.
\end{lemma}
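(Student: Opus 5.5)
We argue by induction on the construction of $\varphi$. Quantifier-free formulas are standard by definition, and standard formulas are closed under $\vee$ and $\wedge$ by definition. So the only real work is the case $\varphi=\exists y\,\psi$ with $\psi$ already standard. Since $\exists$ distributes over $\vee$, we first put $\psi$ into disjunctive normal form over its standard building blocks. It then suffices to handle $\exists y\,(\theta\wedge\sigma_{1}\wedge\dots\wedge\sigma_{n})$, where $\theta$ is a conjunction of literals of $\F(\B)^{*}$ and the $\sigma_{j}$ are special formulas. After renaming bound variables, the existential blocks of the $\sigma_{j}$ can be pulled outward. The whole thing becomes a single existential block over a conjunction of literals, namely equations, non-equations, and (negated) tester predicates. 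The goal is to show that any formula $\exists\bar y\,\bigwedge L$ with $L$ a finite set of such literals is equivalent in $\F(\B)^{*}$ to a disjunction of special formulas and quantifier-free formulas.

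\textbf{Step 1: flatten and unify.} Each equation $s=t$ between $\F(\B)$-terms with variables is rewritten as a finite disjunction of equations of the form $z=u$. We do this by a unification procedure adapted to $\F(\B)$.
\begin{itemize}
\item If both sides have a head symbol outside $\B$, we decompose: $fs_{1}\dots s_{k}=ft_{1}\dots t_{k}$ becomes $\bigwedge s_{i}=t_{i}$, and distinct head symbols give $\bot$.
\item If one side is an element $b\in\B$ and the other is $ft_{1}\dots t_{k}$, we replace the equation by the finite disjunction over all tuples $(b_{1},\dots,b_{k})\in\B^{k}$ with $f(b_{1},\dots,b_{k})^{\B}=b$ of $\bigwedge t_{i}=b_{i}$. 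This is the point where finiteness of $\B$ is used.
\item A term $ft_{1}\dots t_{k}$ equals a non-$\B$ element only if $\neg$ all $t_{i}$ land in a tuple on which $f^{\B}$ is defined. So decomposition must also be split on whether the arguments are in $\B$, which is again a finite disjunction.
\end{itemize}
Occur-check failures such as $z=t(z)$ with $t$ not a variable are handled by the same disjunction. Such an equation is false unless $t$ can collapse into $\B$, which forces $z$ into $\B$, and that is a finite disjunction of $z=b$. After this step we get solved forms: each variable with an equation appears exactly once, as $z=t$ with $z\notin t$. Tester literals $\is{f}(t)$ on compound $t$ are similarly reduced to $\bigwedge\neg(\text{tuple in dom} f^{\B})$-type disjunctions over $\B$. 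For non-equations we use $s\neq t\equiv\neg(s=t)$ and the De Morgan dual of the same decomposition, which yields disjunctions of $z\neq u$.

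\textbf{Step 2: eliminate solved $y$'s.} If some bound $y$ has a solved equation $y=t$, we substitute $t$ for $y$ everywhere and drop $\exists y$. If some free $x$ equals a term containing bound $y$'s, the equation stays; this is exactly the $x_{\alpha_i}=t_{\alpha_i}$ slot of a special formula. The free variables $x$ with non-equations, and the bound $y$ with only non-equations and testers, then match the remaining slots of Definition \ref{def:standardformula}. Conjuncts mentioning no $y$ move outside the quantifier as quantifier-free parts. We also need non-equations $x\neq t$ with $t$ involving $y$'s, and non-equations between two non-variable terms to be reduced to variable-headed ones. Both are covered by the decomposition in Step 1.

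\textbf{Main obstacle.} The delicate point is making Step 1 terminate and correct in the presence of $\B$, since $\F(\B)$ is not free: a compound term $ft_{1}\dots t_{k}$ may evaluate into $\B$. My plan is to first case-split every variable $z$ occurring in the formula as either $\bigvee_{b\in\B}z=b$ or $z\notin\B$. The latter is expressible quantifier-free as $\bigwedge_{b}z\neq b$, or equivalently $\bigvee_{f}\is{f}(z)$. Within the case $z\notin\B$, every term built on $z$ is a genuine $\F(\B)$-term outside $\B$, so ordinary free unification with the occurs check applies. For variables set to some $b$, we substitute and evaluate in $\B$ where defined. I expect this case split, together with checking that the resulting solved form has each $x_{\alpha_i}$ occurring once, to be where most of the care is needed.
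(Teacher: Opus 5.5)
Your proposal is correct, but it organizes the rewriting differently from the paper.

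\textbf{What the paper does.} It also prenexes and passes to DNF, but its case splits over $\B$ are local and driven by the shape of each literal. For an equation $u=t$ it distinguishes heights $0$ versus $>0$, and equal versus distinct head symbols. Only where needed does it split into ``all arguments in $\B$'' (substituting all tuples from $\B$) versus ``some argument outside $\B$''. It then runs four separate passes: equations into the form $x=t$, making each such $x$ occur once, non-equations, and testers. Termination is argued in each pass via decreasing height or a decreasing number of bound variables.

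\textbf{What you do.} You split every variable once and for all, as $z=b$ for some $b\in\B$ (then substitute) or $z\notin\B$ (kept as $\bigwedge_{b}z\neq b$). You must also normalise every ground subterm to its value in $\F(\B)$. Otherwise a syntactic clash such as $fb_{1}b_{2}$ against $b_{3}=f^{\B}(b_{1},b_{2})$ gives the wrong answer. With that in place, every non-ground term evaluates outside $\B$, so evaluation is plain syntactic substitution. The classical free-term-algebra case (Mal'cev) then applies verbatim:
standard unification with occurs check handles equations;
$s\neq t\leftrightarrow\bigvee_{i}z_{i}\neq u_{i}$, via the mgu of $s=t$, handles non-equations;
testers on compound terms are decided by evaluation or by the head symbol.

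\textbf{The bookkeeping you flagged is routine.} If a bound $y$ is solved as $y=u$, the inherited constraint $u\notin\B$ is automatic when $u$ is non-ground and not a variable. It is already among the constraints when $u$ is a variable, and it is evaluated when $u$ is ground. For a solved free $x$, the constraint $x\notin\B$ mentions no bound variable and moves outside the quantifier, so $x$ occurs once in the special part.

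\textbf{Trade-offs.} Your route inherits correctness and termination wholesale from the free case, and it isolates the single use of the finiteness of $\B$ in the initial split. The paper's route splits only on demand, avoiding an unconditional $(|\B|+1)^{n}$ blow-up in the number $n$ of variables, though its stated bound is of the same order.

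One presentational fix: your Step 1 bullets (e.g.\ ``head symbol outside $\B$'') only make sense after the global split, so that split should come first in the write-up.
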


\begin{proof}
First, we extract all (existential) quantifiers from $\varphi$ and write the resulting formula in disjunctive normal form.
Then, by distributing the existential quantifiers over the disjunctions, it suffices to consider a formula $\exists y_{0}\dots y_{m}\psi$, where $\psi$ is quantifier-free and a conjunction of equations and non-equations of $\F(\B)$, and predicates $\is{f}(z)$ and $\neg\is{f}(z)$ where $z$ is a variable.

Now, rewrite $\exists y_{0}y_{1}\dots y_{m}\psi$ as the equivalent $\psi_{0}\wedge\exists y_{0}\dots y_{m}\psi_{1}$ where $\psi_{0}$ does not mention any of the bound variables and each conjunct in $\psi_{1}$ has at least one occurrence of one of the bound variables.
Observe that $\psi_{0}$ is already a standard formula; any quantifier-free $\F(\B)^{*}$-formula is standard.
Now we will rewrite $\psi_{1}$ as a disjunction of special formulas.

\paragraph*{Step one: processing the equations}
We first ensure that every equation occurring in $\psi_{1}$ is of the form $x=t$ where $x$ is a variable distinct from the bound variables $y_{0},y_{1},\dots,y_{m}$.
Additionally, each equation $x=t$ is also such that $x$ does not occur in $t$.
For each equation occurring in $\psi_{1}$, we do the following:
\begin{enumerate}[label=(1\roman*)]
\item If the equation is $u=t$ with $\Delta(u)=\Delta(t)=0$, then at least one of $u$ or $t$ should be a bound variable.\label{step:1i}
Suppose w.l.o.g.~that $u$ is the bound variable $y_{0}$.
If $t$ is also the bound variable $y_{0}$, then we can omit the equation $u=t$ from $\psi_{1}$.
Otherwise, we may rewrite $\exists y_{0}y_{1}\dots y_{m}\psi_{1}$ as $\exists y_{1}\dots y_{m}\psi_{1}[y_{0}/t]$.

\smallskip

\item If the equation is $u=t$ with $\min\{\Delta(u),\Delta(t)\}=0$, then w.l.o.g.~suppose that $\Delta(u)=0$ and $\Delta(t)>0$.
If we further have that $u$ is a constant, then at least one of the bound variables $y_{0},y_{1},\dots,y_{m}$ must occur in $t$, say $y_{0}$.
Now, in order for the equation $u=t$ to hold, we necessarily have that $y_{0}\in\B$.
Then, we may rewrite $\exists y_{0}y_{1}\dots y_{m}\psi_{1}$ as
$$
\bigvee_{b\in B}(\exists y_{1}\dots y_{m}\psi_{1}[y_{0}/b]).
$$
In particular, there are no longer any equations in $\psi_{1}[y_{0}/b]$ in which $y_{0}$ occurs.\label{step:1ii}

Now consider the case where $u$ is not a constant.
Then let $u$ be a variable, say $z$.
If $z$ also occurs in $t$, then it must be that $z$ must take a value from $\B$.
In which case, all subterms of $t$ necessarily must also take values from $\B$.
Then, by letting $z_{0},z_{1},\dots,z_{k}$ be all the variables which occur in $u=t$, we may rewrite the equation as
\begin{equation}\label{eq:one}
\bigvee_{(b_{0},b_{1},\dots,b_{k})\in\B^{k+1}}\left((u=t)[z_{0}/b_{0},\dots,z_{k}/b_{k}]\wedge\bigwedge_{i\leq k}z_{i}=b_{i}\right).
\end{equation}
Each of the equations $(u=t)[z_{0}/b_{0},\dots,z_{k}/b_{k}]$ can be evaluated and thus replaced with either $\bot$ or $\top$, effectively removing either the relevant disjunct or the equation $(u=t)[z_{0}/b_{0},\dots,z_{k}/b_{k}]$ from $\psi_{1}$ respectively.
That is, this rewriting only introduces new equations $u'=t'$ for which $\Delta(u')=\Delta(t')=0$, which can in turn be processed in the same way as in \ref{step:1i}.

Finally, if $z$ does not occur in $t$, then we consider two further sub-cases.
If $z$ is not one of the bound variables, then we can leave $u=t$ unchanged in $\psi_{1}$.
Note that we only wish to remove equations of the form $y_{i}=t$ (recall Definition \ref{def:standardformula}).
Thus, it remains to consider the case when $u$ is a bound variable, say $y_{0}$.
By assumption, we already have that $y_{0}$ does not occur in $t$, and so, $\exists y_{0}y_{1}\dots y_{m}\psi_{1}$ can be rewritten as $\exists y_{1}\dots y_{m}\psi_{1}[y_{0}/t]$.
In this new formula, $y_{0}$ no longer occurs and thus no equations containing $y_{0}$ can occur.

We remark that in this last step, the maximum height of terms contained in $\psi_{1}[y_{0}/t]$ will generally be larger than the maximum height of terms in $\psi_{1}$.
Nonetheless, when such a replacement is made, we also drop one existential quantifier; the height of terms in our formula thus cannot increase arbitrarily many times for this reason.

\smallskip

\item If the equation is $u=t$ with $\min\{\Delta(u),\Delta(t)\}>0$, then let $u=fu_{0}u_{1}\dots u_{\arity(f)-1}$ and $t=gt_{0}t_{1}\dots t_{\arity(g)-1}$.
Unlike the case for term algebras, if $f\neq g$, we cannot directly conclude that it is a contradiction.
Similarly, even if $f=g$, this also does not imply that $u_{i}=t_{i}$ for each $i<\arity(f)$.
For instance, it could be that both $u,t$ evaluate to the same value in $\B$, but in different ways.
Thus, we do the following instead.\label{step:1iii}

Let $z_{0},z_{1},\dots,z_{k}$ be all the variables occurring in $u=t$.
Then we may rewrite $u=t$ simply as (\ref{eq:one}) if $f\neq g$ and
$$
(\ref{eq:one})\vee\bigvee_{j<\arity(f)}\left(u_{j}\notin \B\wedge\bigwedge_{i<\arity(f)}u_{i}=t_{i}\right)
$$
otherwise.
To see why the formula above is equivalent to $u=t$, observe that if at least one of the subterms $u_{j}$ is not contained in $\B$ (expressed as $\bigwedge_{b\in\B}u_{j}\neq b$), then $u$ and thus $t$, must also both be outside of $\B$.
In such a case, for $u=t$ to be satisfied, we obtain that $f=g$, and $u_{i}=t_{i}$ for each $i<\arity(f)$.
On the other hand, if each of $u_{i}$ is contained in $\B$, then all of their subterms must correspondingly also be contained in $\B$.
In particular, we may simply substitute all possible values from $\B$ into the variables $z_{0},z_{1},\dots,z_{k}$ to evaluate the equation $u=t$ as expressed by (\ref{eq:one}).
Furthermore, if $f\neq g$, then $u=t$ can only be satisfied if each of $u_{i}$ is contained in $\B$, and thus, in such a case, $u=t$ is simply equivalent to (\ref{eq:one}).

Just as before, all equations containing no variables can be evaluated, and the relevant disjuncts or equations can be removed from the formula.
Note that all terms in this new formula have height $<\min\{\Delta(u),\Delta(t)\}$.
Thus, by repeating this process, and the steps in \ref{step:1i}, \ref{step:1ii}, we eventually end up with a formula in which all equations are of the form $x=t$ where $x$ is a free variable.
\end{enumerate}

In \ref{step:1i},\ref{step:1ii},\ref{step:1iii} above, we often rewrite $\psi_{1}$ in disjunctive normal form, but each time we do, we distribute the existential quantifiers just as in the beginning of the proof.
Thus, it again suffices to consider only formulas of the form $\exists y_{0}y_{1}\dots y_{m}\psi_{2}$ where $\psi_{2}$ is a conjunction such that every equation in $\psi_{2}$ is of the form $x=t$ for some $x$ distinct from the bound variables and $x$ does not occur in $t$.
Recall from Definition \ref{def:standardformula} that we want each such free variable $x$ to occur exactly once in the formula.
The next step shall ensure this.

\paragraph*{Step two: removing repetition of free variables}
Consider an equation $x=t$ occurring in $\psi_{2}$ and let $\psi_{2}'$ be the formula $\psi_{2}$ without the equation $x=t$; $\psi_{2}\equiv x=t\wedge\psi_{2}'$.
\begin{enumerate}[label=(2\roman*)]
\item If no equation in $\psi_{2}'$ has $x$ as the subject, then we rewrite $\psi_{2}$ as $x=t\wedge(\psi_{2}'[x/t])$.
By our assumption that $x$ does not occur in $t$, we now have that the only occurrence of $x$ in the rewritten $\psi_{2}$ is exactly as the subject of $x=t$.
Additionally, all equations now occurring in the rewritten $\psi_{2}$ still remains of the form $x'=t'$ where $x'$ is free.
However, we note that it may now be that $x'$ does occur in $t'$.\label{step:2i}

Now, if we see such an equation failing to satisfy this property, then we apply \ref{step:1ii} to such an equation, in particular, the rewriting done in (\ref{eq:one}).
This introduces multiple new equations of the form $z_{i}=b_{i}$ where each $z_{i}$ is a variable occurring in $x'=t'$.
Nonetheless, by the actions performed above, all such $z_{i}$ cannot be $x$.
That is, once we further rewrite $\psi_{2}$ by applying (\ref{eq:one}) to the relevant equations, $x=t$ remains the only occurrence of $x$ within the rewritten $\psi_{2}$, and it now satisfies the properties assumed at the beginning of this step.

\smallskip

\item Now suppose that $\psi_{2}'$ has at least one other equation with $x$ as the subject, say $x=u$.
Then we perform the same actions as in \ref{step:2i}, rewriting $\psi_{2}$ as $x=t\wedge(\psi_{2}'[x/t])$.
As before, the equation $x=t$ is now the only occurrence of $x$ in this rewritten $\psi_{2}$.
For the equations $x'=t'$ where $x'$ now occurs in $t'$ due to the rewriting, we perform the same actions as in the previous step.\label{step:2ii}

The main difference from \ref{step:2i} is that we may now have equations of the form $u'=t'$ where $\min\{\Delta(u'),\Delta(t')\}>0$.
For such equations, we perform the actions as described in \ref{step:1iii}.
Such a rewriting introduces multiple new equations, but we note again that these new equations only mention variables occurring in $u'=t'$.
That is, no new equations mentioning the free variable $x$ is introduced.
\end{enumerate}
By repeating the steps above for each free variable $x$ occurring in $\psi_{2}$, we eventually obtain that in each disjunct, if $x=t$ occurs, where $x$ is free, then this equation is the only occurrence of $x$ in the disjunct as desired.

\paragraph*{Step three: processing the non-equations}
We now process the non-equations.
Recall from Definition \ref{def:standardformula} that we want each non-equation to be of the form $z\neq t$ where $z$ is a variable and such that $z$ does not occur in $t$.
Now consider each non-equation $u\neq t$ occurring in the rewritten formula $\psi_{2}$.
Recall also that each such non-equation should contain some bound variable, otherwise we can add it to $\psi_{0}$.
\begin{enumerate}[label=(3\roman*)]
\item If $\Delta(u)=\Delta(t)=0$, then either the truth value of $u\neq t$ can be evaluated immediately, or it must already be of the desired form.\label{step:3i}

\smallskip

\item If $\min\{\Delta(u),\Delta(t)\}=0$, then assume w.l.o.g.~that $\Delta(u)=0<\Delta(t)$.
If $u$ is a constant, then $t$ must contain some bound variable, say $y_{0}$.
Now, in order for $u\neq t$ to hold, we either have $y_{0}\notin\B$, which implies that any evaluation of $t$ also cannot be contained in $\B$, or $y_{0}$ is some value in $\B$ but yet $t$ never evaluates to $u$ under all possible assignments.
In other words, $u\neq t$ can be rewritten as\label{step:3ii}
\begin{equation}\label{eq:two}
y_{0}\notin\B\vee\bigvee_{b\in\B}\big(u\neq t[y_{0}/b]\wedge y_{0}=b\big).
\end{equation}

After expanding and rewriting our formula in disjunctive normal form, note that we have now introduced new equations $y_{0}=b$ for various $b\in\B$ which mention the bound variable $y_{0}$.
Nonetheless, by applying \ref{step:1i} wherever necessary, all such equations are removed from our formula.
Furthermore, in doing so, the only new non-equations introduced into the formula are $y_{0}\neq b$ for various $b\in\B$ (recall $y_{0}\notin\B$ is represented as $\bigwedge_{b\in\B}y_{0}\neq b$) and $u\neq t[y_{0}/b]$.
Observe that $y_{0}\neq b$ is already in the desired form and thus no further processing is required.
For the non-equations $u\neq t[y_{0}/b]$, these have one less bound variable occurring in them than $u\neq t$.
Thus, by repeating this process, we eventually obtain a formula with no variables, whose truth value can be evaluated, or a formula with no occurrence of the bound variables, in which case it can be moved to $\psi_{0}$.

Now suppose that $u$ is a variable, say $z$.
If $z$ does not occur in $t$, then we are done as $u\neq t$ is already of the desired form.
If $z$ occurs in $t$, then for $z\neq t$ to be satisfied, we again have that either $z\notin\B$, in which case it cannot be a strict subterm of itself, or that $z\in\B$ but all assignments of values from $\B$ to $z$ results in $z\neq t$.
Then we may rewrite $z\neq t$ just as in (\ref{eq:two}), but replacing $y_{0}$ with $z$.
Once again, this introduces new equations $z=b$ and non-equations $z\neq b$ and $(z\neq t)[z/b]$.
These new equations can then be processed just as in \ref{step:1i} if $z$ is a bound variable, or \ref{step:2i},\ref{step:2ii} if $z$ is free.
Just as in the previous step, the non-equations $(z\neq t)[z/b]$ introduced in this rewriting are either already of the desired form or have one less variable in them than $u\neq t$. 
Thus, the process must eventually terminate.

\smallskip

\item Finally, if $\min\{\Delta(u),\Delta(t)\}>0$, then let $u=fu_{0}\dots u_{\arity(f)-1}$ and $t=gt_{0}\dots t_{\arity(g)-1}$.
Also let $z_{0},z_{1},\dots,z_{k}$ be all the variables occurring in $u\neq t$.
Now, we separately consider the cases when $f\neq g$ and when $f=g$.\label{step:3iii}

If $f\neq g$, then $u\neq t$ is satisfied when one of the following holds: $u_{i}\notin\B$ for some $i$; $t_{j}\notin\B$ for some $j$; all of $u_{i}$ and $t_{j}$ are contained in $\B$ but the terms never evaluate to the same value.
This can then be written syntactically as:
\begin{equation*}
\bigvee_{i<\arity(f)}u_{i}\notin\B\vee\bigvee_{j<\arity(g)}t_{j}\notin\B\vee\bigvee_{(b_{0},\dots,b_{k})\in\B^{k+1}}\left((u\neq t)[z_{0}/b_{0},\dots,z_{k}/b_{k}]\wedge\bigwedge_{l\leq k}z_{l}=b_{l}\right).
\end{equation*}
To see why it is equivalent to $u\neq t$, observe that if there is some $i$ such that $u_{i}\notin\B$, then $u\notin\B$.
That is, we cannot have $u=t$ since they begin with different function symbols.
The same argument holds if there is some $j$ such that $t_{j}\notin\B$.
On the other hand, if each of $u_{i},t_{j}$ are all contained in $\B$, then the truth value of $u\neq t$ boils down to evaluating $u\neq t$ under each possible assignment of values to the variables occurring in $u\neq t$.
Thus, the formula above is equivalent to $u\neq t$ if $f\neq g$.

Observe that all non-equations occurring in the above formula either contains no variables, or can be further be processed by \ref{step:3i} or \ref{step:3ii}.
The equations $z_{l}=b_{l}$ can again be processed by \ref{step:1i}, \ref{step:2i}, or \ref{step:2ii}.
As mentioned above, these further actions possibly causes yet other actions to happen but must eventually terminate.

If $f=g$, then we have a slight complication in that even if $u_{i}\notin\B$ or $t_{j}\notin\B$ holds, we cannot immediately conclude that $u\neq t$.
To obtain that $u\neq t$, we must further add in a clause to assert that at least one of $u_{l}\neq t_{l}$; since $u\notin\B$, then the (non-)equality of $u,t$ boils down to the (non-)equality of their subterms.
Thus, if $f=g$, $u\neq t$ is equivalent to the following:
\begin{equation*}
\begin{split}
&\bigvee_{i<\arity(f)}\left(\big(u_{i}\notin\B\vee t_{i}\notin\B\big)\wedge\bigvee_{j<\arity(f)}u_{j}\neq t_{j}\right)\\
&\quad\vee\bigvee_{(b_{0},\dots,b_{k})\in\B^{k+1}}\left((u\neq t)[z_{0}/b_{0},\dots,z_{k}/b_{k}]\wedge\bigwedge_{l\leq k}z_{l}=b_{l}\right).
\end{split}
\end{equation*}
Similar observations and arguments as before allows one to conclude that the actions required to process these newly introduced equations and non-equations do not cause any non-terminating behaviour.
\end{enumerate}

It follows from the steps taken that once the process terminates, we obtain a formula which is a disjunction of conjunctions of formulas in which every equation and non-equation is of the following forms: \ref{step:1i}, \ref{step:1ii}, \ref{step:1iii} ensures all equations are of the form $x=t$ where $x$ is free and $x$ does not occur in $t$; \ref{step:2i}, \ref{step:2ii} ensures that for each equation $x=t$ occurring in a disjunct, $x$ occurs nowhere else in the same disjunct; and finally, \ref{step:3i}, \ref{step:3ii}, \ref{step:3iii} ensures that all non-equations are of the form $z\neq t$ where $z$ is a variable which does not occur in $t$.

\paragraph*{Step four: processing the tester predicates}
Finally, in the remaining step, we will process all the predicates $\neg\is{f}(t)$ and $\is{f}(t)$ occurring in our formula.
To ensure that our rewritten formula is a standard formula, we must remove all occurrences of $\neg\is{f}(t)$ and $\is{f}(t)$ from our formula where $t$ is not a variable.
Now, let $\psi_{3}$ be a disjunct in the rewritten $\psi_{2}$ satisfying the above.
We will only describe the process for the predicates $\neg\is{f}(t)$ as one can easily extract a process for $\is{f}(t)$ from it.
Fix a formula $\neg\is{f}(t)$ occurring in $\psi_{3}$ and do the following.
\begin{enumerate}[label=(4\roman*)]
\item If $\Delta(t)=0$, then $t$ is either a variable or a constant.
If $t$ is a variable, then we are done.
On the other hand, if $t$ is a constant, then $\neg\is{f}(t)$ evaluates to $\top$ and can thus be omitted from $\psi_{3}$.

\smallskip

\item If $t$ contains variables and $\Delta(t)>0$, then let these variables be $z_{0},z_{1},\dots,z_{k}$.
Evidently, we can no longer directly evaluate the truth value of $\neg\is{f}(t)$.
Instead, we turn to the non-equations occurring in $\psi_{3}$.
If we have the formula $z_{i}\notin\B$ for some $i$ occurring in $\psi_{3}$ (recall that this is the conjunction $\bigwedge_{b\in\B}z_{i}\neq b$), then we have that the term $t$ cannot evaluate to a value in $\B$.
Then the truth value of $\neg\is{f}(t)$ can be evaluated based on the starting symbol of $t$ (see Definition \ref{def:is}).

On the other hand, if for all $i$, $z_{i}\notin\B$ does not occur in $\psi_{3}$, then we need to rewrite the formula $\psi_{3}$ but accounting for all possible values of $t$ as follows.
$$
\bigvee_{(b_{0},\dots,b_{k})\in\B^{k+1}}\left(\neg\is{f}(t[z_{0}/b_{0},\dots,z_{k}/b_{k}])\wedge\bigwedge_{i\leq k}z_{i}=b_{i}\right)\vee\bigvee_{i<k}\big(z_{i}\notin\B\wedge\neg\is{f}(t)\big).
$$
It is easy to see that the above formula is equivalent to $\neg\is{f}(t)$.
To see why it no longer contains $\neg\is{f}(t)$, note that once $z_{i}\notin\B$ has been specified by the formula, the truth value of $\neg\is{f}(t)$ can be evaluated based on the starting symbol of $t$ just as before.
Similarly, since $t[z_{0}/b_{0},\dots,z_{k}/b_{k}]$ contains no variables, we may also evaluate the truth value of $\neg\is{f}(t[z_{0}/b_{0},\dots,z_{k}/b_{k}])$, effectively removing all occurrences of $\neg\is{f}(t)$ from the formula above.

Evidently, this rewriting introduces new equations and non-equations of the forms $z_{i}=b_{i}$ and $z_{i}\neq b$ respectively.
We may then have to further process the equations using \ref{step:1i}, \ref{step:2i}, or \ref{step:2ii}, but those actions never introduce any predicates of the form $\neg\is{f}(u)$.
Moreover, the non-equations introduced in this rewriting of $\psi_{3}$ are all of the desired form and so, we again have that this cannot cause any non-terminating behaviour.
\end{enumerate}

Once all occurrences of $\neg\is{f}(t)$ and $\is{f}(t)$ have been processed, it is easy to see that our formula is now finally a standard formula.
\end{proof}

\begin{remark}\label{rem:elength}
The rewritten formula has length $O\big(|\B|^{|\varphi|}|\varphi|\big)$.
\end{remark}

The next lemma shows that negations of standard formulas are also standard.

\begin{lemma}\label{lem:negstandard}
Let $\varphi$ be a standard formula.
Then $\neg\varphi$ can be written as an $\F(\B)^{*}$-equivalent standard formula.
\end{lemma}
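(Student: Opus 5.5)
The plan is to proceed by structural induction on the construction of $\varphi$ from special formulas and quantifier-free $\F(\B)^{*}$-formulas using $\vee,\wedge$. De Morgan's laws reduce the inductive cases to the base cases: $\neg(\varphi_1\vee\varphi_2)\equiv\neg\varphi_1\wedge\neg\varphi_2$ and $\neg(\varphi_1\wedge\varphi_2)\equiv\neg\varphi_1\vee\neg\varphi_2$, and standard formulas are closed under $\vee,\wedge$ by definition. The negation of a quantifier-free formula is quantifier-free, hence standard. So everything reduces to showing that the negation of a single special formula
$$
\exists\bar y\,\Big(\bigwedge_i x_{\alpha_i}=t_{\alpha_i}\wedge\theta(\bar x,\bar y)\Big),
$$
where $\theta$ collects the non-equations and tester literals, is equivalent to a standard formula.

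The key step is to show that the existential part essentially always has a witness, in the style of Mal'cev's argument for term algebras. I would first split off the equations. Each $x_{\alpha_i}$ occurs exactly once, and the bound variables never appear as subjects of equations, so the formula is equivalent to
$$
\bigwedge_i x_{\alpha_i}=t_{\alpha_i}\wedge\exists\bar y\,\theta'(\bar x',\bar y).
$$
Here $\bar x'$ are the remaining free variables, and only they (together with $\bar y$) occur in the $t_{\alpha_i}$ and in $\theta$. The $t_{\alpha_i}$ may mention $\bar y$, so strictly the $\exists\bar y$ must range over both pieces. However, each $x_{\alpha_i}$ occurs only as the subject of its own equation, so the equations merely define $x_{\alpha_i}$ as a function of $(\bar x',\bar y)$. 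The formula therefore says that $(\bar x_\alpha)$ lies in the image of a definable map restricted to the set $S=\{\bar y:\theta(\bar x',\bar y)\}$.

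The plan is then to show that $\theta$, a conjunction of conditions $y\neq t$ (with $y\notin t$), $x\neq t$, $\is{f}(y)$ and $\neg\is{f}(y)$, is satisfiable for given $\bar x'$ unless one of finitely many quantifier-free ``obstructions'' holds. For bound variables this is a Mal'cev-style independence argument: a variable $y$ required to satisfy $\is{f}(y)$ can be chosen as a fresh term $f\bar s$ of huge height, outside $\B$ and avoiding all finitely many forbidden values. A variable with only negative tester literals either has a value in $\B$ (finitely many cases to check) or is a constant or a fresh term headed by an allowed symbol. If no symbol is allowed and $\B$ fails, the disjunct is unsatisfiable, which is decidable in advance.

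The main obstacle is the interaction between $\bar y$ and the equations $x_{\alpha_i}=t_{\alpha_i}(\bar x',\bar y)$. Negating ``$\bar x_\alpha$ is in the image'' requires expressing membership in the image quantifier-free plus special formulas. For this I would decompose $x_{\alpha_i}$ along the shape of $t_{\alpha_i}$ using the testers. Where $t_{\alpha_i}$ has head $f$ and a subterm lies outside $\B$, the value $x_{\alpha_i}$ is uniquely $f$ of its immediate subterms. These are not definable as functions, however, so instead I would express $\neg\exists$ by splitting into cases on whether $x_{\alpha_i}\in\B$ (finitely many constants, substituted as in (\ref{eq:one})) or $\is{f}(x_{\alpha_i})$. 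In the latter case I would introduce existentially bound ``component'' variables $\bar w$ with $x_{\alpha_i}=f\bar w$. That equation has the free variable as subject and unique witnesses, so $\neg\exists\bar w(x=f\bar w\wedge\chi(\bar w))\equiv\neg\is{f}(x)\vee\exists\bar w(x=f\bar w\wedge\neg\chi(\bar w))$, which fits back into the special-formula shape after the rewriting of Lemma \ref{lem:eformula}. Iterating this down the finite term structure pushes the negation inside to conditions on the $y$'s. There the residual $\neg\exists\bar y\,\theta$ is handled by the witnessing argument above, which shows it is equivalent to a finite disjunction of quantifier-free conditions, with equalities among the $\bar x'$ and $\B$-membership being the only possible obstructions. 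I expect verifying termination and the correctness of this decomposition, particularly for non-equations $y\neq t$ with $t$ involving other bound variables, to be the hardest part. I would handle it by ordering the bound variables by dependency and choosing witnesses of strictly increasing height.
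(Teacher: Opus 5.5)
Your proposal is correct and is essentially the paper's proof run on the existential side rather than on the negated universal side. The paper decomposes each non-equation $x\neq t$ with $t=ft_0\dots t_k$ by splitting on $\is{f}(x)$ and introducing components $x=fz_0\dots z_k$ (your $\bar w$). It removes $x\neq y_j$ via $\forall y(x\neq y\vee\varphi)\leftrightarrow\varphi[y/x]$, which is the substitution step for the leaf equations $w=y_j$ that you leave implicit. It then discards every bound variable whose tester set $T_j$ is infinite by choosing a witness of large height, and expands finitely when $T_j\in\{\B,\emptyset\}$, exactly as in your Mal'cev-style witness argument.

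One small correction: your displayed identity for $\neg\exists\bar w\,(x=f\bar w\wedge\chi(\bar w))$ holds only under the hypothesis $x\notin\B$. For $x\in\B$ there can be $\bar w\in\B^{k}$ with $f(\bar w)^{\B}=x$. Your case split supplies this hypothesis, but you should state it explicitly.
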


\begin{proof}
Since the negations of quantifier-free $\F(\B)^{*}$-formulas are evidently standard, together with Lemma \ref{lem:eformula}, it suffices to show that negations of special formulas are $\F(\B)^{*}$-equivalent to an existential $\F(\B)^{*}$-formula.

Consider the negation of an arbitrary special formula
\begin{equation}\label{eq:sp}
\forall y_{0}\dots y_{m}\left(\bigvee_{i}x_{\alpha_{i}}\neq t_{\alpha_{i}}\vee\bigvee_{j}x_{\beta_{j}}=t_{\beta_{j}}\vee\bigvee_{k}y_{\gamma_{k}}=t_{\gamma_{k}}\vee\bigvee_{l}\neg\is{f_{\delta_{l}}}(y_{\varepsilon_{l}})\vee\bigvee_{r}\is{f_{\xi_{r}}}(y_{\chi_{r}})\right).
\end{equation}
We shall first ensure that every non-equation $u\neq t$ above is so that $\Delta(u)=\Delta(t)=0$.
Fix a non-equation $x\neq t$ (we suppress the indices) occurring in the formula above.
If $\Delta(t)=0$, then we are done.
Otherwise, let $t=ft_{0}t_{1}\dots t_{\arity(f)-1}$.
Now we split the formula into the disjuncts over whether $\neg\is{f}(x)$ or its negation holds; rewrite (\ref{eq:sp}) as $\big(\is{f}(x)\wedge(\ref{eq:sp})\big)\vee\big(\neg\is{f}(x)\wedge(\ref{eq:sp})\big)$.
Again, we recall that even if $\neg\is{f}(x)$ holds, $x\neq t$ is not a tautology, and similarly, even if $\is{f}(x)$ holds, it does not mean that $x$ must `decompose' into terms $u_{0},u_{1},\dots,u_{\arity(f)-1}$ which are each equal to $t_{i}$.
Let $k=\arity(f)-1$, $\psi$ be the formula (\ref{eq:sp}) but without the quantifiers and the non-equation $x\neq t$, and further rewrite $\big(\is{f}(x)\wedge(\ref{eq:sp})\big)\vee\big(\neg\is{f}(x)\wedge(\ref{eq:sp})\big)$ as
\begin{equation}\label{eq:biggg}
\begin{split}
&\Bigg\{\is{f}(x)\wedge\exists z_{0}\dots z_{k}\Bigg[x=fz_{0}\dots z_{k}\wedge\Bigg(\forall y_{0}\dots y_{m}\Bigg[\psi\vee\bigvee_{i\leq k}\Bigg(\big(z_{i}\notin\B\vee t_{i}\notin\B\big)\wedge\bigvee_{j\leq k}z_{j}\neq t_{j}\Bigg)\\
&\vee\bigvee_{\substack{(b_{0},\dots,b_{k})\in\B^{k+1}\\(c_{0},\dots,c_{k})\in\B^{k+1}}}\Bigg(fb_{0}\dots b_{k}\neq fc_{0}\dots c_{k}\wedge\bigwedge_{i\leq k}\big(z_{i}=b_{i}\wedge t_{i}=c_{i}\big)\Bigg)\Bigg]\Bigg)\Bigg]\Bigg\}\\
&\vee\Bigg\{\neg\is{f}(x)\wedge\Bigg(x\notin\B\vee\forall y_{0}\dots y_{m}\Bigg[\psi\vee\bigvee_{(b,b_{0},\dots,b_{k})\in\B^{k+2}}\Bigg(b\neq fb_{0}\dots b_{k}\wedge\bigwedge_{j\leq k}t_{j}=b_{j}\Bigg)\Bigg]\Bigg)\Bigg\}.
\end{split}
\end{equation}

Now we argue that the above formula is equivalent to $\big(\is{f}(x)\wedge(\ref{eq:sp})\big)\vee\big(\neg\is{f}(x)\wedge(\ref{eq:sp})\big)$.
For the first disjunct (in the curly parentheses), if we have $\is{f}(x)$, then we necessarily have that $x\notin\B$ and may thus be rewritten as $fz_{0}\dots z_{k}$ for some $z_{0},\dots,z_{k}$.
Then, $\bigvee_{i\leq k}\left(\big(z_{i}\notin\B\vee t_{i}\notin\B\big)\wedge\bigvee_{j\leq k}z_{j}\neq t_{j}\right)$ covers the cases where one of $z_{i}$ or $t_{i}$ is assigned some value outside of $\B$, and thus, $x\neq t$ is equivalent to $\bigvee_{j\leq k}z_{j}\neq t_{j}$.
On the other hand, if each of $z_{i}$ and $t_{i}$ are all assigned values from $\B$, then $x\neq t$ is equivalent to $fb_{0}\dots b_{k}\neq fc_{0}\dots c_{k}$ where $b_{i}$ and $c_{i}$ are the assigned values of $z_{i}$ and $t_{i}$ respectively.
Equivalence of the second disjunct (in the curly parentheses) to $\neg\is{f}(x)\wedge (\ref{eq:sp})$ follows by a similar argument and the observation that if $\neg\is{f}(x)$ and $x\notin\B$ holds, then (\ref{eq:sp}) is equivalent to $\bot$.

Observe that each newly introduced non-equation in the formula above either contains no variables, or is such that its height is $<\Delta(t)$.
Thus, by repeating this process, we eventually end up with a formula where all non-equations have height $0$.
In particular, we may assume that we are considering a formula of the form (\ref{eq:sp}) but one where each non-equation $x_{\alpha_{i}}\neq t_{\alpha_{i}}$ is such that $\Delta(t_{\alpha_{i}})=0$.
Furthermore, we may assume that each $t_{\alpha_{i}}$ is one of the bound variables, otherwise the non-equation can be moved out of the scope of the quantifiers $\forall y_{0}\dots y_{m}$.

\begin{remark}\label{rem:expblowup}
Once all non-equations in the formula are of the form $x\neq t$ where $\Delta(t)=0$, observe that for each non-equation $x_{\alpha_{i}}\neq t_{\alpha_{i}}$ originally occurring in the formula, and each strict subterm of $t_{\alpha_{i}}$, we add a new existential quantifier to the formula.
In particular, if $\B$ contains at least one binary function, then the number of existential quantifiers is exponential in the length of the given formula.
\end{remark}

For a bound variable $y_{j}$, pick an arbitrary non-equation $x_{\alpha_{i}}\neq y_{j}$ mentioning it if it exists within our formula.
It is easy to verify that for any formula $\varphi$, $\vDash(\forall y(x\neq y\vee\varphi))\leftrightarrow\varphi[y/x]$.
That is, we may remove the non-equation $x_{\alpha_{i}}\neq y_{j}$ from our formula, and replace all other occurrences of $y_{j}$ with $x_{\alpha_{i}}$.
Then, any other non-equation which previously mentioned $y_{j}$ now no longer mention any bound variable and can thus be moved out of the scope of the quantifiers $\forall y_{0}\dots y_{m}$.
Thus, it remains to consider formulas of the form
$$
\forall y_{0}\dots y_{m}\left(\bigvee_{j}x_{\beta_{j}}=t_{\beta_{j}}\vee\bigvee_{k}y_{\gamma_{k}}=t_{\gamma_{k}}\vee\bigvee_{l}\neg\is{f_{\delta_{l}}}(y_{\varepsilon_{l}})\vee\bigvee_{r}\is{f_{\xi_{r}}}(y_{\chi_{r}})\right).
$$
After a simple rewriting, we obtain
$$
\forall y_{0}\dots y_{m}\left(\left(\bigwedge_{l}\is{f_{\delta_{l}}}(y_{\varepsilon_{l}})\wedge\bigwedge_{r}\neg\is{f_{\xi_{r}}}(y_{\chi_{r}})\right)\rightarrow\left(\bigvee_{j}x_{\beta_{j}}=t_{\beta_{j}}\vee\bigvee_{k}y_{\gamma_{k}}=t_{\gamma_{k}}\right)\right).
$$
By letting $T_{j}$ be the set of elements of $\F(\B)^{*}$ which satisfy the predicates $\is{f_{\delta_{l}}}(y_{\varepsilon_{l}})$ and $\neg\is{f_{\xi_{r}}}(y_{\chi_{r}})$ which mention $y_{j}$, the formula can further be rewritten as
$$
\forall y_{0}\in T_{0}\dots\forall y_{m}\in T_{m}\left(\bigvee_{j}x_{\beta_{j}}=t_{\beta_{j}}\vee\bigvee_{k}y_{\gamma_{k}}=t_{\gamma_{k}}\right).
$$

Now, we claim that if $T_{m}$ is infinite\footnote{We clarify here that we are thinking of elements of $T_{m}$ up to the equivalence induced by $\B$; i.e., we are assuming here that $T_{m}$ contains infinitely many distinct equivalence classes of $\F(\B)$-ground terms.}, then all equations mentioning $y_{m}$ in the formula above can be omitted.
Conversely, if $T_{m}$ is finite, then $T_{m}=\B$, or is an empty set, and so the formula can be rewritten as a finite conjunction (resp.~empty), substituting $y_{m}$ with a value from $\B$ in each conjunct.
(We adopt the convention that an empty conjunction is a tautology and the empty disjunction is a contradiction.)

Suppose w.l.o.g.~that all equations $x_{\beta_{j}}=t_{\beta_{j}}$ and $y_{\gamma_{k}}=t_{\gamma_{k}}$ mention $y_{m}$, otherwise they can be moved beyond the scope of $\forall y_{m}\in T_{m}$.
Evidently, if any of the remaining equations $x_{\beta_{j}}=t_{\beta_{j}}$ or $y_{\gamma_{k}}=t_{\gamma_{k}}$ are tautologies, then we may replace the formula with $\top$, removing the quantifiers $\forall y_{0}\in T_{0}\dots\forall y_{m}\in T_{m}$, thus turning the negation of a special formula into an existential $\F(\B)^{*}$-formula as desired.

Now suppose that none of the equations are tautologies and consider $\forall y_{m}\in T_{m}\bigvee_{j}x_{\beta_{j}}=t_{\beta_{j}}\vee\bigvee_{k}y_{\gamma_{k}}=t_{\gamma_{k}}$.
For any valuation function $s:V\to\F(\B)$, consider $d_{s}\in\F(\B)$ chosen as follows.
Let $d_{s}\in T_{m}$ be so that $\Delta(d_{s})$ is strictly larger than $\Delta(s(x_{\beta_{j}}))$, $\Delta(s(y_{\gamma_{k}}))$ where $y_{\gamma_{k}}\neq y_{m}$, and also $\Delta(s(t_{\gamma_{k}}))$ where $y_{\gamma_{k}}=y_{m}$.
Since the total number of terms at each height is finite, such a choice of $d_{s}$ must exist.
Furthermore, for such a choice of $d_{s}$, note that $\hat{s}:V\to\F(\B)$ where $\hat{s}(z)=s(z)$ for all $z\neq y_{m}$ and $\hat{s}(y_{m})=d_{s}$ is such that the interpretation of $\bigvee_{j}x_{\beta_{j}}=t_{\beta_{j}}\vee\bigvee_{k}y_{\gamma_{k}}=t_{\gamma_{k}}$ evaluates to $\bot$ under $\hat{s}$.
That is, for any $s:V\to\F(\B)$, $\forall y_{m}\in T_{m}\bigvee_{j}x_{\beta_{j}}=t_{\beta_{j}}\vee\bigvee_{k}y_{\gamma_{k}}=t_{\gamma_{k}}$ also evaluates to $\bot$.
Then, we may remove the quantifier $\forall y_{m}\in T_{m}$ and all equations mentioning $y_{m}$ from the formula above.

Conversely, if $T_{m}$ is finite, then we split into two further sub-cases.
First, if there is a predicate $\is{f}(y_{m})$, then $T_{m}\cap\B=\emptyset$.
Additionally, by picking some $b_{0},\dots,b_{\arity(f)-1}\notin\B$, we can generate infinitely many distinct elements starting with $f$ and satisfying $\is{f}$.
Thus, if $T_{m}$ is to be finite, we necessarily have that $\neg\is{f}(y_{m})$ also occurs in the formula, or that $\F(\B)=\B$, resulting in $T_{m}=\emptyset$.

Second, if the only predicates mentioning $y_{m}$ are of the form $\neg\is{f}(y_{m})$, then for some other function symbol $g\neq f$, we can similarly generate infinitely many distinct elements satisfying $\neg\is{f}$ provided that $\F(\B)\setminus\B\neq\emptyset$.
That is, either $\neg\is{g}(y_{m})$ also occurs in the formula for all other function symbols $g$ in the language of $\B$, or $\F(\B)=\B$.
In the former, we evidently have that $T_{m}=\B$, and in the latter we similarly obtain that either $T_{m}=\B$ or $T_{m}=\emptyset$.

\smallskip

Therefore, each quantifier $\forall y_{i}\in T_{i}$ can either be removed, simultaneously removing all equations mentioning the removed bound variable; or the quantification can be replaced by a quantification over $\B$; or the formula can be replaced by $\top$.
\end{proof}

Following Remark \ref{rem:expblowup}, and a careful analysis of the proof, the resulting existential formula obtained after removing all universal quantifiers should have length $O\big(k^{|\varphi|}|\B|^{2|\varphi|}|\varphi|\big)$, where $k$ is the maximum arity of the function symbols.
Together with Remark \ref{rem:elength}, given $\varphi$ standard, the length of the standard formula $\psi\equiv\neg\exists x\varphi$ obtained from applying Lemmas \ref{lem:eformula} and \ref{lem:negstandard} is $O\big(k^{|\B|^{|\varphi|}|\varphi|}|\B|^{2|\B|^{|\varphi|}|\varphi|}|\B|^{|\varphi|}|\varphi|\big)$.
Thus, the quantifier elimination procedure is not bounded by any elementary recursive function.

\begin{definition}\label{def:closedstandard}
A standard formula is {\bf closed} if it contains no free variables.
That is, it must be constructed from special formulas and the logical connectives $\vee,\wedge$.
\end{definition}

It follows from Lemmas \ref{lem:eformula} and \ref{lem:negstandard} that any $\F(\B)^{*}$-formula with no free variables is equivalent to a closed standard formula.
Thus, to express $\F(\B)^{*}$-formulas as an equivalent quantifier-free formula, it suffices to show the following.

\begin{lemma}
Every special formula with no free variables is $\F(\B)^{*}$-equivalent to a quantifier-free formula.
\end{lemma}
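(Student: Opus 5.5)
Since the special formula has no free variables, there are no equations $x_{\alpha_i}=t_{\alpha_i}$ or non-equations $x_{\beta_j}\neq t_{\beta_j}$ over free variables. The formula therefore has the shape
$$
\exists y_{0}\dots y_{m}\left(\bigwedge_{k}y_{\gamma_{k}}\neq t_{\gamma_{k}}\wedge\bigwedge_{l}\is{f_{\delta_{l}}}(y_{\varepsilon_{l}})\wedge\bigwedge_{r}\neg\is{f_{\xi_{r}}}(y_{\chi_{r}})\right),
$$
where each $t_{\gamma_k}$ is built from constants, elements of $\B$ and the bound variables, and $y_{\gamma_k}$ does not occur in $t_{\gamma_k}$. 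The plan is to show that the truth value of this sentence is decidable. It is then $\F(\B)^{*}$-equivalent to $\top$ or $\bot$, both of which are quantifier-free.

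As in the proof of Lemma \ref{lem:negstandard}, I group the tester literals by variable and let $T_j$ be the set of elements satisfying the testers that mention $y_j$. The sentence becomes
$$
\exists y_0\in T_0\dots\exists y_m\in T_m\bigwedge_k y_{\gamma_k}\neq t_{\gamma_k}.
$$
Each $T_j$ is determined by finitely many tester literals on one variable, and the case analysis at the end of Lemma \ref{lem:negstandard} shows that each $T_j$ is either empty, equal to $\B$ (or a subset of $\B$, which is explicitly computable since $\is{f}$ is $\bot$ on $\B$), or infinite. Consistency of the tester literals for each variable is checked syntactically. If two distinct $f,g$ both have $\is{}$-literals on one variable, or $\is{f}$ and $\neg\is{f}$ both occur, then $T_j=\emptyset$ and the sentence is $\bot$.

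Next I eliminate the variables whose domain is finite, i.e.\ $T_j\subseteq\B$. For these I replace $\exists y_j\in T_j$ by the finite disjunction $\bigvee_{b\in T_j}$ obtained by substituting $b$ for $y_j$. Non-equations with no remaining variables are evaluated using the finite partial algebra $\B$, via the computation of $\F(\B)$-values. This leaves a disjunction of sentences in which every remaining bound variable ranges over an infinite $T_j$.

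The main step is a witness-existence argument for these remaining sentences, dual to the $d_s$ argument in Lemma \ref{lem:negstandard}. If no non-equation is already falsified (i.e.\ none has become a variable-free false statement), I claim the sentence is true. I choose values for the infinite-domain variables successively, in some order $y_{j_1},y_{j_2},\dots$, each of strictly larger height than every value previously chosen and than every constant term in the formula. This is possible because each infinite $T_j$ contains elements of arbitrarily large height: by the generation argument in Lemma \ref{lem:negstandard}, $T_j$ contains terms $fs_1\dots s_n$ with $s_i\notin\B$ arbitrarily deep, which are irreducible in $\F(\B)$.

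I then must check that each $y_{\gamma_k}\neq t_{\gamma_k}$ holds under this assignment. There are two cases. If $y_{\gamma_k}$ was chosen after all variables of $t_{\gamma_k}$, its height exceeds $\Delta(t_{\gamma_k})$ evaluated, since evaluation in $\F(\B)$ never increases height beyond the syntactic bound. If some variable $y'$ of $t_{\gamma_k}$ was chosen later, then $t_{\gamma_k}$ is a non-variable term containing $y'$, because $y_{\gamma_k}$ does not occur in $t_{\gamma_k}$. Since $y'\notin\B$ is deep, the value of $t_{\gamma_k}$ contains $y'$ as a proper subterm and is not reduced by $\B$; hence its height exceeds that of $y_{\gamma_k}$. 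The delicate point, which I expect to be the main obstacle, is justifying that terms containing an element outside $\B$ do not collapse. This follows from the construction of $\F(\B)$: an application $f(t_1,\dots,t_s)$ is evaluated inside $\B$ only when all $t_i\in\B$. So in every case the sentence reduces effectively to $\top$ or $\bot$.
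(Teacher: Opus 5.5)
Your proof is correct in substance. It rests on the same two facts as the paper's proof. First, each $T_j$ is empty, equal to $\B$, or infinite with elements outside $\B$ of unbounded height. Second, an element outside $\B$ never collapses when placed inside a term.

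The difference is the direction of the argument. The paper negates the sentence and observes that the negation contains no non-equations, only the $y_{\gamma_k}=t_{\gamma_k}$ and tester literals. It then reuses the last part of the proof of Lemma~\ref{lem:negstandard}. There, each $\forall y_m\in T_m$ is either expanded as a conjunction over $\B$, or deleted together with every equation mentioning $y_m$, because a sufficiently deep $d_s\in T_m$ falsifies all of them. You run the dual argument on the existential side: you expand the finite-domain variables as disjunctions and then build all the deep witnesses at once, ordered by height.

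Given Lemma~\ref{lem:negstandard}, the paper's route takes two lines. Yours is self-contained and makes the decision criterion explicit: after the finite expansions, a disjunct is true exactly when all its variable-free non-equations are true. Note also that any sentence is trivially $\F(\B)^{*}$-equivalent to $\top$ or $\bot$, with or without decidability. What both arguments actually supply is an effective way to decide which, and that is what decidability of the theory needs.

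Three details need repair.

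\textbf{The selection rule is too weak as stated.} Take $f$ binary, $\F(\B)\neq\B$, and the sentence $\exists y_0\exists y_1\bigl(\is{f}(y_0)\wedge\is{f}(y_1)\wedge y_1\neq fy_0y_0\bigr)$, with $y_0$ assigned first. Your rule then permits assigning to $y_1$ the value of $fy_0y_0$: it lies in $T_1$, its height exceeds that of $y_0$, and there are no constant terms, yet it violates the non-equation. You must instead require each new value to exceed the height of the value of every term whose variables are already assigned. For example, require it to exceed the previous maximum height by more than the largest syntactic height occurring in the formula. This is exactly how the paper chooses $d_s$, and it is what your first verification case actually uses.

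\textbf{The case $t_{\gamma_k}=y'$ is missed.} The fact that $y_{\gamma_k}$ does not occur in $t_{\gamma_k}$ does not make $t_{\gamma_k}$ a compound term. It may be the later-chosen variable $y'$ itself. The height comparison still settles this case.

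\textbf{Non-equations $b\neq t$ are not covered.} Substituting $b\in\B$ for a finite-domain $y_{\gamma_k}$ produces non-equations $b\neq t$ in which $t$ contains infinite-domain variables, and your case split omits these. They hold because the value of $t$ lies outside $\B$, by the same no-collapse property you already invoke.
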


\begin{proof}
From Definition \ref{def:standardformula}, the negation of a special formula with no free variables contains no non-equations.
Then by applying the process in Lemma \ref{lem:negstandard} to it, we may rewrite it as a quantifier-free formula with no variables.
\end{proof}

\begin{remark}\label{rem:extension}
Note that in the proofs of Lemmas \ref{lem:eformula} and \ref{lem:negstandard}, the finiteness of $\B$ is only used when expressing $x\in\B$ or $x\notin\B$ as a finite disjunction or conjunction respectively.
In view of this, we posit that by allowing $\in\B$ as a predicate to our language, our result can be easily extended to cover partial algebras $\B$ which are infinite and admit quantifier elimination.
We state this as a proposition without proof below.
\end{remark}

\begin{proposition}
If $\B$ admits quantifier elimination, then the expansion of $\F(\B)$ with $\is{f}$ predicates and $\in\B$ with the obvious semantics also admits quantifier elimination. $\hfill\qed$
\end{proposition}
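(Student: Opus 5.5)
The plan is to rerun the proofs of Lemmas \ref{lem:eformula} and \ref{lem:negstandard} almost verbatim for an arbitrary partial algebra $\B$, now in the language of $\F(\B)$ expanded by the testers $\is{f}$ and a unary predicate $\in\B$. There is one global change. Every place where those proofs wrote $x\in\B$ as the finite disjunction $\bigvee_{b\in B}x=b$, or $x\notin\B$ as the conjunction $\bigwedge_{b\in B}x\neq b$, we instead keep the atomic formula $\in\B(x)$ or its negation.

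Every place where a finite case split $\bigvee_{(b_0,\dots,b_k)\in\B^{k+1}}$ was used to evaluate a term once all its variables lie in $\B$ must be handled differently. Such a split is replaced by the conjunct $\bigwedge_i\in\B(z_i)$ together with an equation, non-equation or tester literal that we regard as a formula \emph{of $\B$}. Here we use that when all arguments lie in $\B$, the value of a term either lies in $\B$, and is then given by the partial operations of $\B$, or escapes $\B$ at a first undefined application. This is expressible by a quantifier-free case distinction on the definedness of subterms in $\B$. Definedness of $f(\bar z)$ in $\B$ is itself expressed by $\in\B(f\bar z)$ evaluated in $\F(\B)$.

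The key intermediate statement to prove is that any formula $\exists\bar y\,(\bigwedge_i \in\B(y_i)\wedge\theta)$, with $\theta$ a conjunction of literals all of whose subterms are forced into $\B$, is equivalent in $\F(\B)^*$ to a quantifier-free formula. To get this, we relativize $\theta$ to $\B$ and apply quantifier elimination in $\B$, reading $\B$ as a relational structure with graphs of the partial operations. We then translate the resulting quantifier-free $\B$-formula back: an atom $f(\bar a)=b$ of $\B$ becomes the literal $f\bar a=b\wedge\in\B(f\bar a)$ in $\F(\B)$. This is where the hypothesis on $\B$ is used, and it gives the one genuinely new ingredient over Lemma \ref{lem:eformula}.

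Next, in the step of Lemma \ref{lem:negstandard} that decides whether some $T_m$ is finite, the case $T_m=\B$ can no longer be handled by a finite conjunction over $\B$. Instead $\forall y_m\in T_m$ becomes $\forall y_m(\in\B(y_m)\to\cdots)$. Pushing all $\B$-relativized variables together, this is the negation of an existential formula of the form treated in the key statement, so it is eliminable. The argument that infinite $T_m$ (infinitely many heights) lets us drop equations is unchanged, since it never uses the finiteness of $\B$.

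The main obstacle is the interaction of mixed variables. In a single conjunct, some bound variables range over $\B$, others are outside $\B$, and free variables may be unconstrained. We must check that after Steps one to four every literal either involves only $\B$-relativized variables, and is then passed to $\B$'s quantifier elimination, or is in the special form that the height arguments handle. I would try to secure this by splitting, at the start, each variable $v$ by $\in\B(v)\vee\neg\in\B(v)$, and in the second case also over $\is{f}(v)$ for each $f$. With that split made, the decomposition steps of \ref{step:1iii} and \ref{step:3iii} separate the two worlds cleanly.
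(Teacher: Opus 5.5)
The paper gives no proof beyond Remark~\ref{rem:extension}, and your plan follows that remark. You are right that finiteness of $\B$ is used in more places than the remark admits: the case splits over tuples from $\B$ in \eqref{eq:one}, \eqref{eq:two}, \eqref{eq:biggg} and Step four, and the finite conjunction when $T_m=\B$. Your relativize-and-eliminate step is fine whenever every variable in play, free ones included, is forced into $\B$.

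The gap is that this is not what you face at the $T_m=\B$ step of Lemma~\ref{lem:negstandard}. Even after you push the $\B$-relativized variables together and remove the unbounded-height ones, the block $\forall y_m\,(y_m\in\B\to\cdots)$ still contains the equations $x_\beta=t_\beta$. Their left side is a free variable that need not lie in $\B$. For example, with $\beta$ quantifier-free over $\B$, the negation of $\exists y\,(y\in\B\wedge\beta(y)\wedge x\neq fyc)$ is $\forall y\,(y\in\B\to x=fyc\vee\neg\beta(y))$. For $x=fac\notin\B$ with $a\in\B$, it says that $\beta$ holds at no element of $\B$ other than $a$. The literal $x=fyc$ is not a formula of $\B$, so this block is not the negation of anything of your key form.

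Your preliminary split by $x\in\B$ and testers does not help. The equation has a variable side, so \ref{step:1iii} and \ref{step:3iii} never touch it. Lemma~\ref{lem:negstandard} never decomposes the $x_\beta=t_\beta$ at all; it only drops them (infinite $T_m$) or instantiates them over a finite $\B$. The height argument is unavailable too, since every element of $\B$ has height $0$ in $\F(\B)$, however large $\B$ is.

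The same coupling appears in Lemma~\ref{lem:eformula}. A bound $\B$-variable occurring in some $x_\alpha=t_\alpha$ cannot be removed by $\B$'s quantifier elimination. So special formulas must be enlarged to carry $y\in\B$ and quantifier-free $\B$-constraints on bound variables, and Lemma~\ref{lem:negstandard} must be reproved for that larger class.

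The missing step is to decouple before invoking $\B$. If $x_\beta\notin\B$ and $\is{f}(x_\beta)$, the tuple $\bar z$ with $x_\beta=f\bar z$ is unique, and $x_\beta=fs_0\dots s_k$ holds iff $\bigwedge_i z_i=s_i$. If $x_\beta\notin\B$ and $\neg\is{f}(x_\beta)$, the equation is simply false. So $\forall\bar y\,(\Phi\vee x_\beta=t_\beta)$ becomes $\exists\bar z\,\bigl(x_\beta=f\bar z\wedge\forall\bar y\,(\Phi\vee\bigwedge_i z_i=s_i)\bigr)$. This is the device of \eqref{eq:biggg}, applied to equations instead of non-equations.

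Now split each $z_i$ by $z_i\in\B$ and recurse on height. Every equation that still mentions a universal $\B$-variable then becomes a formula of $\B$ or is false. Only at that point is the universal block a genuine $\B$-formula, so your key statement applies and produces an existential formula of the enlarged special kind.

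As in the paper, what this yields is reduction to (enlarged) standard formulas plus elimination for sentences, not literal quantifier elimination. Even for $\B$ consisting only of constants, $\exists y\,(x=fyy)$ has no quantifier-free equivalent in $\F(\B)^{*}$.
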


\section{A non-initial algebra with polynomial time word problem}\label{sec:app}

Recall that an {\bf initial algebra} is an algebra isomorphic to $\F_{\Gamma}$ where $\Gamma$ is a finite set of equations (not necessarily of ground terms).

\begin{remark}
If $\Sigma$ has only a single unary function symbol, then all algebras of $\Sigma$ are initial.
To see this,  observe that the free term algebra with signature $\Sigma$ consists of distinct orbits generated by the single unary function symbol.
Any quotient could cause orbits `joining' up from some finite point, or orbits becoming eventually periodic.
\end{remark}

\begin{theorem}\label{thm:app}
For any signature $\Sigma$ which contains at least one constant symbol, and at least one binary functions symbol or two unary function symbols, there exists a non-initial algebra $\A$ over $\Sigma$ with a polynomial time word problem.
\end{theorem}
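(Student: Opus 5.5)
Our plan is to build $\A$ as a quotient $\F/\sim_{E}$ of the ground term algebra by an \emph{infinite} but very sparse, polynomial-time recognisable set $E$ of ground equations. The sparsity is chosen so that $\A$ is "locally almost free": every finite fragment of $E$ presents an almost free algebra. We then use the quantifier elimination of Section \ref{sec:qe} to show that no finite set of equations, even with variables, can present $\A$. We describe the binary case, with $f$ binary and $c$ a constant. The two-unary case $g,h$ is handled the same way, using the words $g^n h c$ in place of the terms built with $f$ below.

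\textbf{Construction.} Let $t_0=c$ and $t_{n+1}=ft_nt_n$, and choose a sparse sequence $n_0<n_1<\dots$, say $n_k=2^{2^k}$. Let $E=\{t_{n_k}=t_{n_k+1}\mid k\in\N\}$ together with all instances obtained by taking subterm closure. The intended effect is that each $t_{n_k}$ collapses to a fixed point of $x\mapsto fxx$, but only along that chain, so no other identifications arise.

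\textbf{Word problem.} Given $s,t$, let $N$ be their heights. Only the equations of $E$ with $n_k\le N+1$ can be used in a derivation between $s$ and $t$: no rewrite ever has to increase height beyond this bound, because each equation relates terms of adjacent heights along a single chain. So $s\sim_E t$ if and only if $s\sim_{\Gamma_N}t$, where $\Gamma_N$ is the finite set of equations with $n_k\le N+1$. Writing each such equation $t_{n}=t_{n+1}$ as a DAG, with shared subterms, keeps its size $O(n)$. Kozen's algorithm (Section \ref{sec:kozen}) run on these DAG encodings then decides $s\sim_{\Gamma_N}t$ in time polynomial in $|s|+|t|$. The step that needs care here is the monotonicity argument: that derivations need not pass through taller terms. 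We would prove it with the tree-rewriting description $\rightarrow^\star_\Gamma$, exploiting that each $t_n$ is its own unique representative at its height.

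\textbf{Non-initiality.} This is the main obstacle. Suppose $\A\cong\F_\Gamma$ for a finite $\Gamma$ that may contain variables. We would first show that each equation of $\Gamma$ with variables, say $p(\bar x)=q(\bar x)$, holds in $\A$ only trivially. The reason is that $\A$ is locally free: outside the chain of $t_n$'s it agrees with $\F$, so substituting sufficiently generic ground terms for $\bar x$ forces $p$ and $q$ to be syntactically equal. Such equations can therefore be discarded, leaving $\Gamma$ ground and $\A$ almost free. It remains to rule out that $\A$ is almost free. For this we would use Theorem \ref{thm:size}, or the finiteness and ST-closure analysis. In an almost free $\F_{\Gamma}$, the set of heights $n$ at which $t_n$ is equivalent to a term of strictly smaller height is either bounded or eventually periodic, which is the cyclic-type behaviour of Section \ref{sec:size}. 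In $\A$ this set is $\{n_k+1\}$, which is infinite but has unbounded gaps. Alternatively we would reach the same contradiction through the theory: in $\A$ the sentences "$t_{n}= t_{n+1}$" hold for infinitely many $n$ and fail for infinitely many $n$, with no periodicity, whereas quantifier elimination in $\F(\B)^*$ makes such families eventually periodic in any almost free algebra. We expect that making either argument rigorous, and in particular the step discarding equations with variables, is the hardest part.
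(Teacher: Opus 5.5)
\textbf{The construction collapses to an almost free algebra, so it cannot be non-initial.} The terms $t_n$ are iterates of the fixed polynomial $x\mapsto fxx$, and congruence pushes any identification up the chain. From $t_{n_0}\sim_E t_{n_0+1}$ you get $t_{n_0+2}=ft_{n_0+1}t_{n_0+1}\sim_E ft_{n_0}t_{n_0}=t_{n_0+1}\sim_E t_{n_0}$. Inductively, $t_m\sim_E t_{n_0}$ for all $m\ge n_0$. So every equation $t_{n_k}=t_{n_k+1}$ with $k\ge 1$ already follows from the first one. The congruence generated by your equations is the one generated by the single ground equation $t_2=t_3$ (as $n_0=2$), and $\A\cong\F_{\{t_2=t_3\}}$ is almost free, hence initial. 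The unary version with $g^{n}hc$ collapses the same way. Whatever your ``subterm closure'' adds can only coarsen the congruence, so the facts your non-initiality argument relies on fail in any case. The sentence $t_n=t_{n+1}$ holds in $\A$ for \emph{every} $n\ge n_0$. More generally, in any quotient of $\F$ the set of $n$ for which $t_n$ is equivalent to a shorter term is upward closed: if $t_n\sim u$ with $\Delta(u)<n$, then $t_{n+1}\sim fuu$. So no sparse choice of $n_k$ along a single chain can produce the gappy set $\{n_k+1\}$. Making $t_{n_k}$ a fixed point of $x\mapsto fxx$ ``only along that chain'' is self-defeating for exactly this reason.

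\textbf{Missing idea.} You need the infinitely many equations to be mutually independent (the $(k+1)$-st must not follow from the first $k$). You also need to handle presentations with variables, which you flag as the hardest step but do not carry out. The paper gets both by diagonalization rather than by analyzing a fixed algebra. It enumerates all finite presentations $\Gamma_s$. At stage $s+1$ it uses quantifier elimination for the almost free $\F_{E_s}$ to decide whether $\F_{E_s}\vDash\Gamma_s$.
\begin{itemize}
\item If so, $\F_{\Gamma_s}$ maps onto $\F_{E_s}$ by initiality. Adding an equation $\hat{p}_s=\hat{q}_s$ that is false in $\F_{E_s}$ therefore separates $\F_E$ from $\F_{\Gamma_s}$.
\item If not, the paper fixes a witness pair that is equal in $\F_{\Gamma_s}$ but distinct in $\F_{E_s}$.
\end{itemize}
Every added equation relates two terms of the same, ever-increasing height that are still distinct in $\F_{E_s}$. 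Hence rewriting preserves height, and later equations cannot affect equalities among shorter terms. This protects the witnesses, gives independence, and reduces deciding $u=t$ to a finite stage $E_s$. Padding each new equation to a size exceeding the running time so far means that stage is reached within about $|u|$ steps, and Kozen's algorithm then gives polynomial time. Quantifier elimination thus enters as the procedure deciding $\F_{E_s}\vDash\Gamma_s$, not as a periodicity theorem about a fixed algebra.
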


\begin{proof}
Let $\{\Gamma_{n}\}_{n\in\mathbb{N}}$ be an effective listing of all finite sets of equations over $\Sigma$.
We construct $\A=\F_{E}$ in stages, where $E=\bigcup_{s}E_{s}$ is a set of ground term equations over $\Sigma$.
By a careful definition of $E$, for each term $t\in\F$, the height remains the same as a term $t\in\F_{E}$.
\begin{description}
\item[Construction]
Let $E_{0}=\emptyset$.
At stage $s+1$, check if $\F_{E_{s}}\vDash\Gamma_{s}$.
If it does, then search for $p_{s},q_{s}$ such that $\Delta(p_{s})=\Delta(q_{s})>\max_{i<s}\{\Delta(\hat{p}_{i}),\Delta(\hat{q}_{i})\}$ and $\F_{E_{s}}\vDash p_{s}\neq q_{s}$.
For the first such pair $p_{s},q_{s}$ found, define $\hat{p}_{s},\hat{q}_{s}$ to be some {\bf padded terms} (to be defined based on the signature) of size\footnote{Recall that the size of $p$ is $|\Omega(p)|$ and is generally exponential in the height $\Delta(p)$.} larger than the total time the construction has run so far and define $E_{s+1}=E_{s}\cup\{\hat{p}_{s}=\hat{q}_{s}\}$. 
(Note that each stage will generally run for some non-elementary, in $s$, time, since the quantifier elimination procedure cannot be elementary recursive.)
Otherwise, search for $p_{s},q_{s}$ such that $\Delta(p_{s})\geq\Delta(q_{s})>\max_{i<s}\{\Delta(\hat{p}_{i}),\Delta(\hat{q}_{i})\}$, $\F_{\Gamma_{s}}\vDash p_{s}=q_{s}$, and $\F_{E_{s}}\vDash p_{s}\neq q_{s}$.
Then define $E_{s+1}=E_{s}$, $\hat{p}_{s}=p_{s}$ and $\hat{q}_{s}=q_{s}$.
\end{description}

Now we verify that the construction works.
First, we claim that $p_{s},q_{s}$ with the desired properties can always be found.
For the base case, note that $\F_{E_{0}}=\F$.
If $\F_{E_{0}}\vDash\Gamma_{0}$, then we must be able to find $p_{0},q_{0}$ for which $\Delta(p_{0})=\Delta(q_{0})$ and such that $\F_{E_{0}}\vDash p_{0}\neq q_{0}$ as desired.

If our signature $\Sigma$ contains at least one binary function symbol $f$, then we can define the padded term $\hat{p}_{0},\hat{q}_{0}$ respectively as $ff\dots fp_{0}\dots p_{0}$ and $ff\dots fq_{0}\dots q_{0}$, of the requisite size.
If our signature $\Sigma$ does not contain any binary function symbols, then define $\hat{p}_{0},\hat{q}_{0}$ respectively as $ff\dots fp_{0}$ and $gg\dots gq_{0}$ where $f,g$ are two distinct unary function symbols in $\Sigma$.
Observe that in either case, if $\F_{E_{0}}\vDash p_{0}\neq q_{0}$, then $\F_{E_{0}}\vDash\hat{p}_{0}\neq\hat{q}_{0}$.

Now, if $\F_{E_{0}}\not\vDash\Gamma_{0}$, then there must exist witnesses $p_{0},q_{0}$ such that $\F_{E_{0}}\vDash p_{0}\neq q_{0}$ but $\F_{\Gamma_{0}}\vDash p_{0}=q_{0}$.
Since equality in $\F_{\Gamma_{0}}$ is computably enumerable, and we know such witnesses exist, we can effectively find $p_{0},q_{0}$ with the desired properties.

Suppose that for each $i<s$, $p_{i},q_{i}$ are such that $\Delta(p_{i})\geq\Delta(q_{i})>\max_{j<i}\{\Delta(\hat{p}_{j}),\Delta(\hat{q}_{j})\}$, and if $\hat{p}_{i}=\hat{q}_{i}\in E_{s}$, then $\Delta(\hat{p}_{i})=\Delta(\hat{q}_{i})$.
If $\F_{E_{s}}\vDash\Gamma_{s}$, then we pick $p_{s},q_{s}$ as follows.
If $\Sigma$ contains a binary function symbol $f$, then pick $p_{s}=f\hat{p}_{s-1}f\hat{p}_{s-1}\hat{p}_{s-1}$ and $q_{s}=ff\hat{p}_{s-1}\hat{p}_{s-1}\hat{p}_{s-1}$.
By the assumption that $\Delta(\hat{p}_{i})=\Delta(\hat{q}_{i})$ if $\hat{p}_{i}=\hat{q}_{i}\in E_{s}$ for each $i<s$, $\F_{E_{s}}\not\vDash \hat{p}_{s-1}=f\hat{p}_{s-1}\hat{p}_{s-1}$.
Then, it follows that $\F_{E_{s}}\vDash p_{s}\neq q_{s}$, and thus $p_{s},q_{s}$ satisfy the desired properties.
Then, we may define $\hat{p}_{s}$ and $\hat{q}_{s}$ in the same way as before.
If $\Sigma$ contains two unary function symbols $f,g$, a similar argument suffices to verify that $p_{s}=f\hat{p}_{s-1}$ and $q_{s}=g\hat{p}_{s-1}$ also works.
Once again, we pad out $p_{s},q_{s}$ to obtain $\hat{p}_{s}=ff\dots fp_{s}$ and $\hat{q}_{s}=gg\dots gq_{s}$.
For the case where $\F_{E_{s}}\not\vDash\Gamma_{s}$, the same argument as in the base case together with the observation that there must exist infinitely many witnesses gives the desired conclusion.

Second, we verify that $\F_{E}$ where $E=\bigcup_{s}E_{s}$ has a polynomial time word problem.
Since every equation $p=q$ contained in $E$ is such that $\Delta(p)=\Delta(q)$, then for any ground terms $u,t$, if $\Delta(u)\neq\Delta(t)$, then $\F_{E}\vDash s\neq t$.
If on the other hand $\Delta(u)=\Delta(t)$, run the construction until a stage $s$ at which $|u|$ units of time have passed.
Recall that $s$ will be much smaller than $|u|$.
At such a stage, note that $|E_{s}|=\sum_{p=q\in E_{s}}(|p|+|q|)$ is bounded by $2|u|^{2}$.
Furthermore, any subsequent equation added to $E$ concerns only terms of strictly larger height than $u$ and so, the equality of $u,t$ cannot be affected by them.
That is, $\F_{E}\vDash u=t$ if and only if $\F_{E_{s}}\vDash u=t$.
Then, by using Kozen's algorithm, we obtain that checking if $\F_{E}\vDash u=t$ is polynomial time in $|E_{s}|,|u|,|t|$ which is polynomial in $|u|,|t|$.

Finally, we verify that $\F_{E}$ is not initial.
More specifically, that $\F_{E}\not\cong\F_{\Gamma_{s}}$ for all $s$.
Observe that $\F_{E}$ is always a homomorphic image of $\F_{E_{s}}$ for any finite $s$.
In addition, none of $\F_{E_{s}}$ is isomorphic to $\F_{E}$.
In this way, if $\Gamma_{s}$ is such that $\F_{E_{s}}\vDash\Gamma_{s}$, then $\F_{E}\not\cong\F_{\Gamma_{s}}$.
On the other hand, if $\F_{E_{s}}\not\vDash\Gamma_{s}$, then let $p_{s},q_{s}$ be the witnesses to  for which $\F_{\Gamma_{s}}\vDash p_{s}=q_{s}$ but $\F_{E_{s}}\vDash p_{s}\neq q_{s}$.
In this case, recall that we define $\hat{p}_{s}=p_{s}$ and $\hat{q}_{s}=q_{s}$.
Then, to see that $\F_{E}\vDash \hat{p}_{s}\neq \hat{q}_{s}$, note that any equation $\hat{p}_{t}=\hat{q}_{t}$ later added into $E$, is such that $\Delta(\hat{p}_{t})=\Delta(\hat{q}_{t})>\max\{\Delta(\hat{p}_{s}),\Delta(\hat{q}_{s})$, and so, cannot make $\F_{E}\vDash\hat{p}_{s}=\hat{q}_{s}$.
Thus, we obtain that for all $s$, $\F_{E}\not\cong\F_{\Gamma_{s}}$.
\end{proof}

\bibliographystyle{plain}
\bibliography{mybib}

@inproceedings{kozen77,
    author = {Dexter Campbell Kozen},
    title = {Complexity of finitely presented algebras},
    booktitle = {Proceedings of the ninth annual ACM symposium on Theory of computing},
    year = {1977},
    month = {May},
    pages = {164-177}
}

@inproceedings{sturm2002,
    author = {Sturm, Thomas and Weispfenning, Volker},
    year = {2002},
    month = {09},
    title = {Quantifier elimination in term algebras: The case of finite languages},
    booktitle = {Computer Algebra in Scientific Computing (CASC), TUM Muenchen},
    pages={285--300}
}

@article{mal1936,
  title={Axiomatizable classes of locally free algebras of various types},
  author={Mal'cev, Anatoly Ivanovich},
  journal={The metamathematics of algebraic systems. Collected papers: 1936-1967},
  pages={262--281},
  year={1971},
  publisher={North Holland Publishing Company}
}

@article{Rabin1969,
  author  = {Rabin, Michael O.},
  title   = {Decidability of Second-Order Theories and Automata on Infinite Trees},
  journal = {Transactions of the American Mathematical Society},
  volume  = {141},
  year    = {1969},
  pages   = {1--35},
  publisher = {American Mathematical Society}
}

@article{Belegradek1988,
  author  = {Oleg. V. Belegradek},
  title   = {Teoriya modelei lokal'no svobodnykh algebr},
  journal = {Trudy Instituta Matematiki Sibirskogo Otdeleniya AN SSSR},
  volume  = {8},
  pages   = {3--25},
  year    = {1988},
  note    = {In Russian}
}

@book{Hodges1993,
  author    = {Wilfrid Hodges},
  title     = {Model Theory},
  publisher = {Cambridge University Press},
  year      = {1993}
}

@article{compton1990,
  title={A uniform method for proving lower bounds on the computational complexity of logical theories},
  author={Compton, Kevin J and Henson, C Ward},
  journal={Annals of pure and applied logic},
  volume={48},
  number={1},
  pages={1--79},
  year={1990},
  publisher={Elsevier}
}

@article{Marongiu1993,
  author    = {Giampaolo Marongiu and Stefano Tulipani},
  title     = {Undecidable Fragments of Term Algebras with Subterm Relation},
  journal   = {Fundamenta Informaticae},
  volume    = {19},
  number    = {3-4},
  pages     = {371--382},
  year      = {1993},
  publisher = {IOS Press},
  doi       = {10.3233/FI-1993-193-408}
}

@article{Tulipani1994,
  author  = {Sauro Tulipani},
  title   = {Decidability of the Existential Theory of Infinite Terms with Subterm Relation},
  journal = {Information and Computation},
  volume  = {108},
  number  = {1},
  pages   = {1--33},
  year    = {1994},
  publisher = {Academic Press},
  doi       = {10.1006/inco.1994.1001}
}

@inproceedings{Zhang2004,
    author={Zhang, Ting and Sipma, Henny B. and Manna, Zohar},
    title={Term Algebras with Length Function and Bounded Quantifier Alternation},
    booktitle={Theorem Proving in Higher Order Logics},
    year={2004},
    publisher={Springer Berlin Heidelberg},
    pages={321--336}
}

@article{zhang2006,
  title={Decision procedures for term algebras with integer constraints},
  author={Zhang, Ting and Sipma, Henny B and Manna, Zohar},
  journal={Information and Computation},
  volume={204},
  number={10},
  pages={1526--1574},
  year={2006},
  publisher={Elsevier}
}

@inproceedings{korovin2000,
  title={A decision procedure for the existential theory of term algebras with the Knuth-Bendix ordering},
  author={Korovin, Konstantin and Voronkov, Andrei},
  booktitle={Proceedings Fifteenth Annual IEEE Symposium on Logic in Computer Science (Cat. No. 99CB36332)},
  pages={291--302},
  year={2000},
  organization={IEEE},
  doi={10.1109/LICS.2000.855777}
}

@article{Rybina2001,
  author = {Rybina, Tatiana and Voronkov, Andrei},
  year = {2001},
  month = {04},
  pages = {155-181},
  title = {A Decision Procedure for Term Algebras With Queues},
  volume = {2},
  journal = {ACM Transactions on Computational   Logic},
  doi = {10.1145/371316.371494}
}

@article{comon1993,
  title={Complete axiomatizations of some quotient term algebras},
  author={Comon, Hubert},
  journal={Theoretical Computer Science},
  volume={118},
  number={2},
  pages={167--191},
  year={1993},
  publisher={Elsevier}
}

@article{Bakhadyr2005,
  author = {Khoussainov, Bakhadyr and Rubin, Sasha},
  year = {2005},
  month = {08},
  pages = {292-308},
  title = {Decidability of Term Algebras Extending Partial Algebras},
  volume = {3634},
  journal = {Lecture Notes in Computer Science},
  doi = {10.1007/11538363_21}
}

@article{nov55,
    author = {Pyotr Sergeyevich Novikov},
    title = {On the algorithmic unsolvability of the word problem in group theory},
    journal = {Proceedings of the Steklov Institute of Mathematics},
    year = {1955},
    volume = {44},
    pages = {1-143}
}

@article{boone58,
    author = {William W. Boone},
    title = {The word problem},
    journal = {Annals of Mathematics},
    year = {1959},
    volume = {70},
    issue = {2},
    pages = {207-265},
    doi = {10.2307/1970103}
}

\end{document}